\numberwithin{table}{section}
\newcommand{\A}{\mathrm{A}}    \newcommand{\Aut}{\mathrm{Aut}}
 \newcommand{\bbF}{\mathbb{F}} 
     \newcommand{\calO}{\mathcal{O}}
\newcommand{\G}{\mathrm{G}}    \newcommand{\GaO}{\mathrm{\Gamma O}} \newcommand{\GaU}{\mathrm{\Gamma U}}     \newcommand{\GU}{\mathrm{GU}}
\newcommand{\J}{\mathrm{J}}
\newcommand{\M}{\mathrm{M}} 
\newcommand{\N}{\mathrm{N}}
\newcommand{\Nor}{\mathbf{N}}
\newcommand{\Pa}{\mathrm{P}}        \newcommand{\POm}{\mathrm{P\Omega}}
  \newcommand{\SL}{\mathrm{SL}}  \newcommand{\Soc}{\mathrm{Soc}} \newcommand{\Sp}{\mathrm{Sp}}  \newcommand{\SU}{\mathrm{SU}}  \newcommand{\Sy}{\mathrm{S}}  
\newcommand{\Tr}{\mathrm{Tr}}
\def\Z{{\bf Z}}
\newtheorem{theorem}{Theorem}[section]
\newtheorem{lemma}[theorem]{Lemma}
\newtheorem{problem}[theorem]{Problem}
\theoremstyle{definition}
\newtheorem*{remark}{Remark}
\begin{document}

\title[Factorizations of almost simple orthogonal groups of minus type]{Factorizations of almost simple orthogonal groups of minus type}

\author[Li]{Cai Heng Li}
\address{(Li) SUSTech International Center for Mathematics and Department of Mathematics\\Southern University of Science and Technology\\Shenzhen 518055\\Guangdong\\P.~R.~China}
\email{lich@sustech.edu.cn}

\author[Wang]{Lei Wang}
\address{(Wang) School of Mathematics and Statistics\\Yunnan University\\Kunming 650091\\Yunnan\\P.~R.~China}
\email{wanglei@ynu.edu.cn}

\author[Xia]{Binzhou Xia}
\address{(Xia) School of Mathematics and Statistics\\The University of Melbourne\\Parkville 3010\\VIC\\Australia}
\email{binzhoux@unimelb.edu.au}

\begin{abstract}
This is the fourth one in a series of papers classifying the factorizations of almost simple groups with nonsolvable factors.
In this paper we deal with orthogonal groups of minus type.

\textit{Key words:} group factorizations; almost simple groups

\textit{MSC2020:} 20D40, 20D06, 20D08
\end{abstract}

\maketitle

\section{Introduction}

An expression $G=HK$ of a group $G$ as the product of subgroups $H$ and $K$ is called a \emph{factorization} of $G$, where $H$ and $K$ are called \emph{factors}. A group $G$ is said to be \emph{almost simple} if $S\leqslant G\leqslant\Aut(S)$ for some nonabelian simple group $S$, where $S=\Soc(G)$ is the \emph{socle} of $G$. In this paper, by a factorization of an almost simple group we mean that none its factors contains the socle. The main aim of this paper is to solve the long-standing open problem:

\begin{problem}\label{PrbXia1}
Classify factorizations of finite almost simple groups.
\end{problem}

Determining all factorizations of almost simple groups is a fundamental problem in the theory of simple groups, which was proposed by Wielandt~\cite[6(e)]{Wielandt1979} in 1979 at The Santa Cruz Conference on Finite Groups. It also has numerous applications to other branches of mathematics such as combinatorics and number theory, and has attracted considerable attention in the literature.
In what follows, all groups are assumed to be finite if there is no special instruction.

The factorizations of almost simple groups of exceptional Lie type were classified by Hering, Liebeck and Saxl~\cite{HLS1987}\footnote{In part~(b) of Theorem~2 in~\cite{HLS1987}, $A_0$ can also be $\G_2(2)$, $\SU_3(3)\times2$, $\SL_3(4).2$ or $\SL_3(4).2^2$ besides $\G_2(2)\times2$.} in 1987.
For the other families of almost simple groups, a landmark result was achieved by Liebeck, Praeger and Saxl~\cite{LPS1990} thirty years ago, which classifies the maximal factorizations of almost simple groups. (A factorization is said to be \emph{maximal} if both the factors are maximal subgroups.)
Then factorizations of alternating and symmetric groups are classified in~\cite{LPS1990}, and factorizations of sporadic almost simple groups are classified in~\cite{Giudici2006}.
This reduces Problem~\ref{PrbXia1} to the problem on classical groups of Lie type.
Recently, factorizations of almost simple groups with a factor having at least two nonsolvable composition factors are classified in~\cite{LX2019}\footnote{In Table~1 of~\cite{LX2019}, the triple $(L,H\cap L,K\cap L)=(\Sp_{6}(4),(\Sp_2(4)\times\Sp_{2}(16)).2,\G_2(4))$ is missing, and for the first two rows $R.2$ should be $R.P$ with $P\leqslant2$.}, and those with a factor being solvable are described in~\cite{LX} and~\cite{BL2021}.

As usual, for a finite group $G$, we denote by $G^{(\infty)}$ the smallest normal subgroup of $X$ such that $G/G^{(\infty)}$ is solvable.
For factorizations $G=HK$ with nonsolvable factors $H$ and $K$ such that $L=\Soc(G)$ is a classical group of Lie type, the triple $(L,H^{(\infty)},K^{(\infty)})$ is described in~\cite{LWX}. Based on this work, in the present paper we characterize the triples $(G,H,K)$ such that $G=HK$ with $H$ and $K$ nonsolvable, and $G$ is an orthogonal group of minus type.

For groups $H,K,X,Y$, we say that $(H,K)$ contains $(X,Y)$ if $H\geqslant X$ and $K\geqslant Y$, and that $(H,K)$ \emph{tightly contains} $(X,Y)$ if in addition $H^{(\infty)}=X^{(\infty)}$ and $K^{(\infty)}=Y^{(\infty)}$. 
Our main result is the following Theorem~\ref{ThmOmegaMinus}.
Note that it is elementary to determine the factorizations of $G/L$ as this group has relatively simple structure (and in particular is solvable).

\begin{theorem}\label{ThmOmegaMinus}
Let $G$ be an almost simple group with socle $L=\POm_{2m}^-(q)$, where $m\geqslant4$, and let $H$ and $K$ be nonsolvable subgroups of $G$ not containing $L$. Then $G=HK$ if and only if (with $H$ and $K$ possibly interchanged) $G/L=(HL/L)(KL/L)$ and $(H,K)$ tightly contains $(X^\alpha,Y^\alpha)$ for some $(X,Y)$ in Table~$\ref{TabOmegaMinus}$ and $\alpha\in\Aut(L)$.
\end{theorem}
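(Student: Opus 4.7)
The plan is to prove the biconditional in two directions, with the bulk of the work concentrated on necessity.

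For the \emph{if} direction, I would verify row by row that each pair $(X,Y)$ in Table~\ref{TabOmegaMinus} actually induces the desired factorization. Concretely, for each row I need to check that $L=(X\cap L)(Y\cap L)$ by showing $|X\cap L|\,|Y\cap L|=|L|\,|X\cap Y\cap L|$. Combined with the hypothesis $G/L=(HL/L)(KL/L)$ and the tight containment (which forces $(H\cap L)^{(\infty)}=(X\cap L)^{(\infty)}$ and similarly for $K$), a standard order argument on $|G|=|H|\,|K|/|H\cap K|$ then delivers $G=HK$. These are tedious but routine orders-of-magnitude calculations inside $\POm_{2m}^{-}(q)$ and its outer automorphism group.

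For the \emph{only if} direction, assume $G=HK$ with $H,K$ nonsolvable. The quotient factorization $G/L=(HL/L)(KL/L)$ is automatic. The key external input is the classification in \cite{LWX}, which enumerates every possible triple $(L,H^{(\infty)},K^{(\infty)})$ arising from such a factorization of an almost simple classical group; restricting its output to $L=\POm_{2m}^{-}(q)$ with $m\geqslant4$ produces a finite explicit list of candidate pairs $(X,Y)$ up to $\Aut(L)$-conjugacy. In parallel, the maximal factorization theorem of Liebeck--Praeger--Saxl \cite{LPS1990} embeds $H$ and $K$ into known maximal subgroups $M_{1}$ and $M_{2}$ of $G$ respectively, whose structure (reducible, imprimitive, field-extension, classical subgroup, or $C_{9}$) is tabulated. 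The intersection of these two constraints sandwiches each factor between its residual and the normalizer in $M_{i}$ of that residual, which is exactly the flexibility encoded by tight containment.

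The main obstacle is the case-by-case analysis: for each candidate $(L,X,Y)$ supplied by \cite{LWX}, I must (i) compute $\Nor_{\Aut(L)}(X)$ and $\Nor_{\Aut(L)}(Y)$ to list all admissible overgroups $H,K$ inside the relevant maximal subgroups, (ii) decide exactly which combinations satisfy $G=HK$ as opposed to the weaker $L=(H\cap L)(K\cap L)$, and (iii) match the surviving pairs with a unique row of Table~\ref{TabOmegaMinus} up to $\Aut(L)$-conjugacy, verifying that the table is both exhaustive and non-redundant. The delicate points will be the interaction between the graph automorphism of $\POm_{2m}^{-}(q)$ and the field and diagonal automorphisms (which controls the splitting of $\Aut(L)$-classes), the careful bookkeeping of intersections $H\cap K$ when $X\cap Y$ is not itself a nice subgroup, and small parameters (small $m$, small $q$, and especially characteristic two with the triality-free regime for $m=4$) where computer verification in \magma\ will likely be necessary to complete the argument.
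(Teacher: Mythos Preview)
Your \emph{if} direction has a concrete gap. You propose to verify $L=(X\cap L)(Y\cap L)$ row by row, but this equality fails for several rows of Table~\ref{TabOmegaMinus}. For instance, in Row~4 one has $X\cap L=\Omega(V)_{e_1,f_1}=\Omega_{2m-2}^-(2)$ and $Y\cap L=\SU_m(2)$, and a comparison with Row~3 shows $|(X\cap L)(Y\cap L)|=|L|/2$; the same phenomenon occurs in Rows~5--9, where $Z$ strictly contains $L$ and the needed outer pieces of $X,Y$ do not lie in $L$. What the paper actually verifies (Lemmas~\ref{LemOmegaMinus01}--\ref{LemOmegaMinus08}) is the weaker statement $XY\supseteq L$, which holds because in every row $Z=XY$ with $L\leqslant Z$. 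Once that is known, sufficiency is a one-line consequence of Lemma~\ref{LemXia01}: tight containment gives $H\geqslant X^\alpha$ and $K\geqslant Y^\alpha$, hence $HK\supseteq X^\alpha Y^\alpha\supseteq L$, and together with $G/L=(HL/L)(KL/L)$ this forces $G=HK$. No order argument on $|H\cap K|$ is needed, and indeed tight containment gives you no handle on $|H\cap K|$, so your proposed ``standard order argument'' would not go through.

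Your \emph{only if} direction is broadly on the right track---invoke~\cite{LWX} to list the possible $(H^{(\infty)},K^{(\infty)})$ and then analyse cases---but two points deserve correction. First, $\POm_{2m}^-(q)$ has no graph automorphism (that belongs to plus type), so the ``delicate interaction'' you anticipate does not arise here; $\Out(L)$ is generated by diagonal and field automorphisms together with the conformal involution. Second, the paper's case analysis is more targeted than computing full normalizers: in the key case $H^{(\infty)}=\Omega_{2m-2}^-(q)$ (Lemma~\ref{LemOmegaMinus09}) a $p$-part inequality forces $q\in\{2,4\}$ and pins down which outer pieces $H$ and $K$ must contain, and the identity $HL\cap KL=(H\cap KL)(K\cap HL)$ of Lemma~\ref{LemXia05} is used to transfer constraints between the factors. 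Your plan of enumerating admissible overgroups inside normalizers would work in principle but is heavier than necessary.
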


\begin{remark}
Here are some remarks on Tables~\ref{TabOmegaMinus}:
\begin{enumerate}[{\rm(I)}]
\item The column $Z$ gives the smallest almost simple group with socle $L$ that contains $X$ and $Y$. In other words, $Z=\langle L,X,Y\rangle$.
It turns out that $Z=XY$ for all pairs $(X,Y)$.
\item The groups $X$, $Y$ and $Z$ are described in the corresponding lemmas whose labels are displayed in the last column. 
\item The description of groups $X$ and $Y$ are up to conjugations in $Z$ (see Lemma~\ref{LemXia04}(b) and Lemma~\ref{LemXia03}).
\end{enumerate}
\end{remark}

\begin{table}[htbp]
\captionsetup{justification=centering}
\caption{$(X,Y)$ for orthogonal groups of minus type}\label{TabOmegaMinus}
\begin{tabular}{|l|l|l|l|l|l|}
\hline
Row & $Z$ & $X$ & $Y$ & Remark & Lemma\\
\hline
1 & $\POm_{2m}^-(q)$ & $\Omega_{2m-1}(q)$ & $\SU_m(q)$ & $m$ odd & \ref{LemOmegaMinus01}\\
2 & $\POm_{2m}^-(q)$ & $q^{2m-2}{:}\Omega_{2m-2}^-(q)$ & $\SU_m(q)$ & $m$ odd & \ref{LemOmegaMinus02}\\
\hline
3 & $\Omega_{2m}^-(2)$ & $\Omega_{2m-2}^-(2).2$ & $\SU_m(2)$ & $m$ odd & \ref{LemOmegaMinus03}\\
4 & $\mathrm{O}_{2m}^-(2)$ & $\Omega_{2m-2}^-(2).2$ & $\SU_m(2).2$ & $m$ odd & \ref{LemOmegaMinus04}\\
5 & $\GaO_{2m}^-(4)$ & $\Omega_{2m-2}^-(4).4$ & $\SU_m(4).4$ & $m$ odd & \ref{LemOmegaMinus04}\\
\hline
6 & $\mathrm{O}_{2m}^-(2)$ & $\GaO_m^-(4)$ & $\Omega_{2m-1}(2).2$ & $m$ even & \ref{LemOmegaMinus05}\\
7 & $\GaO_{2m}^-(4)$ & $\GaO_m^-(16)$ & $\Omega_{2m-1}(4).4$ & $m$ even & \ref{LemOmegaMinus05}\\
\hline
8 & $\mathrm{O}_{2m}^-(2)$ & $\SU_{m/2}(4).4$ & $\Omega_{2m-1}(2).2$ & $m/2$ odd & \ref{LemOmegaMinus06}\\
9 & $\GaO_{2m}^-(4)$ & $\SU_{m/2}(16).8$ & $\Omega_{2m-1}(4).4$ & $m/2$ odd & \ref{LemOmegaMinus06}\\
\hline
10 & $\Omega_{10}^-(2)$ & $\A_{12}$, $\M_{12}$ & $2^8{:}\Omega_8^-(2)$ & & \ref{LemOmegaMinus07}\\
11 & $\Omega_{18}^-(2)$ & $3.\J_3$ & $2^{16}{:}\Omega_{16}^-(2)$ & & \ref{LemOmegaMinus08}\\
\hline
\end{tabular}
\vspace{3mm}
\end{table}

\section{Preliminaries}

In this section we collect some elementary facts regarding group factorizations.

\begin{lemma}\label{LemXia01}
Let $G$ be a group, let $H$ and $K$ be subgroups of $G$, and let $N$ be a normal subgroup of $G$. Then $G=HK$ if and only if $HK\supseteq N$ and $G/N=(HN/N)(KN/N)$.
\end{lemma}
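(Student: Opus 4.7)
The plan is to establish the two implications of the lemma separately. The ``only if'' direction should be immediate: if $G=HK$, then $N\subseteq G=HK$, and passing to the quotient gives $G/N=HK/N=(HN/N)(KN/N)$, since the canonical projection is compatible with set products.

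For the ``if'' direction, I would take an arbitrary $g\in G$ and aim to exhibit it as a product of an element of $H$ with an element of $K$. From the hypothesis $G/N=(HN/N)(KN/N)$, I can choose $h\in H$ and $k\in K$ such that $gN=(hN)(kN)=hkN$, which yields $g=hkn$ for some $n\in N$. The task is then to absorb the stray factor $n$ into $HK$. First, I would invoke normality of $N$ in $G$: setting $n_0:=knk^{-1}\in N$ gives $kn=n_0k$, and hence $g=hn_0k$. Then the hypothesis $N\subseteq HK$ lets me write $n_0=h_1k_1$ with $h_1\in H$ and $k_1\in K$; substituting back gives $g=(hh_1)(k_1k)\in HK$, as required.

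The only point requiring any care is noticing that the two hypotheses must be combined with the normality of $N$: neither $N\subseteq HK$ nor the factorization of $G/N$ alone suffices to rewrite the residual element of $N$ in the needed form, but normality allows us to move it past $k$ and then the inclusion in $HK$ splits it into the correct shape. Beyond that, the argument is purely manipulative, with no cardinality count or case analysis needed.
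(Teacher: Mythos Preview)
Your proof is correct and follows essentially the same approach as the paper: both use normality of $N$ to rewrite $G=(HN)(KN)$ as $HNK$ and then absorb $N$ via the hypothesis $N\subseteq HK$. The only difference is cosmetic---you carry out the argument element by element, while the paper does the same manipulations at the level of set products.
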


\begin{proof}
If $G=HK$, then $HK\supseteq N$, and taking the quotient modulo $N$ we obtain
\[
G/N=(HN/N)(KN/N).
\]
Conversely, suppose that $HK\supseteq N$ and $G/N=(HN/N)(KN/N)$. Then 
\[
G=(HN)(KN)=HNK 
\]
as $N$ is normal in $G$. Since $N\subseteq HK$, it follows that $G=HNK\subseteq H(HK)K=HK$, which implies $G=HK$.
\end{proof}

Let $L$ be a nonabelian simple group. We say that $(H,K)$ is a \emph{factor pair} of $L$ if $H$ and $K$ are subgroups of $\Aut(L)$ such that $HK\supseteq L$. For an almost simple group $G$ with socle $L$ and subgroups $H$ and $K$ of $G$, Lemma~\ref{LemXia01} shows that $G=HK$ if and only if $G/L=(HL/L)(KL/L)$ and $(H,K)$ is a factor pair.
As the group $G/L$ has a simple structure (and in particular is solvable), it is elementary to determine the factorizations of $G/L$.
Thus to know all the factorizations of $G$ is to know all the factor pairs of $L$.
Note that, if $(H,K)$ is a factor pair of $L$, then any pair of subgroups of $\Aut(L)$ containing $(H,K)$ is also a factor pair of $L$.
Hence we have the following:

\begin{lemma}\label{LemXia02}
Let $G$ be an almost simple group with socle $L$, and let $H$ and $K$ be subgroups of $G$ such that $(H,K)$ contains some factor pair of $L$. Then $G=HK$ if and only if $G/L=(HL/L)(KL/L)$.
\end{lemma}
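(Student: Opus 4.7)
The plan is to derive this lemma as a quick corollary of the preceding Lemma~\ref{LemXia01}, applied with the normal subgroup $N=L$. The only content beyond that invocation is the verification that the hypothesis ``$(H,K)$ contains some factor pair of $L$'' forces the set-theoretic containment $HK\supseteq L$, which is precisely the side condition appearing in Lemma~\ref{LemXia01}.

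Concretely, I would first note that since $G$ is almost simple with socle $L$, the subgroup $L=\Soc(G)$ is normal in $G$, so Lemma~\ref{LemXia01} applies with $N=L$. Next, by hypothesis there exist subgroups $H_0\leqslant H$ and $K_0\leqslant K$ of $\Aut(L)$ forming a factor pair of $L$, i.e.\ with $H_0K_0\supseteq L$. Since $H\geqslant H_0$ and $K\geqslant K_0$, we immediately obtain $HK\supseteq H_0K_0\supseteq L$. Lemma~\ref{LemXia01} then yields the equivalence $G=HK$ if and only if $G/L=(HL/L)(KL/L)$, which is the statement to be proved.

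There is essentially no obstacle here: the lemma is a packaged convenience form of Lemma~\ref{LemXia01} tailored to the almost simple setting, in which $L$ is automatically a normal subgroup and the side condition $HK\supseteq N$ is supplied by the factor pair assumption. Its value lies not in depth but in its role later in the paper, where it reduces the existence of a factorization $G=HK$ to the elementary verification that $G/L=(HL/L)(KL/L)$, once a factor pair contained in $(H,K)$ has been exhibited (for instance, via the tight containment of one of the pairs $(X^\alpha,Y^\alpha)$ in Table~\ref{TabOmegaMinus}).
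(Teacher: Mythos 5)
Your proof is correct and matches the paper's own (implicit) argument: the paper likewise obtains this lemma by applying Lemma~\ref{LemXia01} with $N=L$, observing that any pair of subgroups containing a factor pair is itself a factor pair, so that $HK\supseteq L$ holds automatically.
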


In light of Lemma~\ref{LemXia02}, the key to determine the factorizations of $G$ with nonsolvable factors is to determine the minimal ones (with respect to the containment) among factor pairs of $L$ with nonsolvable subgroups.

\begin{lemma}\label{LemXia03}
Let $L$ be a nonabelian simple group, and let $(H,K)$ be a factor pair of $L$.
Then $(H^\alpha,K^\alpha)$ and $(H^x,K^y)$ are factor pairs of $L$ for all $\alpha\in\Aut(L)$ and $x,y\in L$.
\end{lemma}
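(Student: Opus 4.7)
The plan is to handle the two assertions separately. The first, concerning $(H^\alpha,K^\alpha)$ for $\alpha\in\Aut(L)$, is essentially a formality: conjugation distributes over set-wise products, so $H^\alpha K^\alpha=(HK)^\alpha$, and since $L=\Soc(\Aut(L))$ is characteristic in $\Aut(L)$ we have $L^\alpha=L$. Thus from $HK\supseteq L$ one immediately obtains $H^\alpha K^\alpha\supseteq L$, which is exactly the factor-pair condition.

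The second assertion, concerning $(H^x,K^y)$ for $x,y\in L$, is the one requiring a small trick. I would write out
\[
H^x K^y \;=\; x^{-1}Hx\cdot y^{-1}Ky \;=\; x^{-1}H(xy^{-1})Ky,
\]
and then observe that $xy^{-1}\in L\subseteq HK$, so there exist $h_0\in H$ and $k_0\in K$ with $xy^{-1}=h_0k_0$. Substituting and absorbing $h_0$ into $H$ on the right and $k_0$ into $K$ on the left yields
\[
H^x K^y \;=\; x^{-1}Hh_0k_0Ky \;=\; x^{-1}(HK)y.
\]
Since $HK\supseteq L$ and $x^{-1},y\in L$, and $L$ is closed under multiplication, we get $x^{-1}(HK)y\supseteq x^{-1}Ly=L$, completing the proof.

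The only place a reader could stumble is the absorption step, where one must notice that $xy^{-1}$ itself can be factored through $HK$; once that is spotted, the identity $H^xK^y=x^{-1}(HK)y$ is immediate and the rest is bookkeeping. No deeper structure of $L$ beyond its being a subgroup (and being normal, equivalently characteristic, in $\Aut(L)$ for the first part) is needed, so the two-line argument should suffice and no appeal to the simplicity of $L$ is actually required apart from guaranteeing $L\trianglelefteq\Aut(L)$.
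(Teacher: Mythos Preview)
Your proof is correct and follows essentially the same route as the paper's: for the first assertion you use $(HK)^\alpha\supseteq L^\alpha=L$, and for the second you factor $xy^{-1}\in L\subseteq HK$ as $h_0k_0$ to obtain $H^xK^y=x^{-1}HKy\supseteq x^{-1}Ly=L$. The paper's argument is line-for-line the same, only more tersely stated.
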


\begin{proof}
It is evident that $H^\alpha K^\alpha=(HK)^\alpha\supseteq L^\alpha=L$. Hence $(H^\alpha,K^\alpha)$ is a factor pair.
Since $xy^{-1}\in L\subseteq HK$, there exist $h\in H$ and $k\in K$ such that $xy^{-1}=hk$. Therefore, 
\[
H^xK^y=x^{-1}Hxy^{-1}Ky=x^{-1}HhkKy=x^{-1}HKy\supseteq x^{-1}Ly=L,
\]
which means that $(H^x,K^y)$ is a factor pair.
\end{proof}

The next lemma is~\cite[Lemma~2(i)]{LPS1996}.

\begin{lemma}\label{LemXia05}
Let $G$ be an almost simple group with socle $L$, and let $H$ and $K$ be subgroups of $G$ not containing $L$. If $G=HK$, then $HL\cap KL=(H\cap KL)(K\cap HL)$. 
\end{lemma}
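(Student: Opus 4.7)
The plan is to prove the set equality $HL\cap KL=(H\cap KL)(K\cap HL)$ by establishing the two inclusions separately. The forward inclusion will be a direct computation that uses only the normality of $L$ in $G$, while the reverse inclusion will exploit the hypothesis $G=HK$ to factor an arbitrary element of $HL\cap KL$ into the desired shape.

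For the inclusion $(H\cap KL)(K\cap HL)\subseteq HL\cap KL$, I would argue as follows. Since $H\cap KL\subseteq H$ and $K\cap HL\subseteq HL$, the product is contained in $H\cdot HL=HL$. On the other hand, $H\cap KL\subseteq KL$ and $K\cap HL\subseteq K$, so using normality of $L$ the product is contained in $KL\cdot K=KLK=K^{2}L=KL$. Intersecting gives the claim.

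For the reverse inclusion, let $g\in HL\cap KL$ and write $g=hl$ with $h\in H$ and $l\in L$. Because $L\subseteq G=HK$, I can factor $l=h'k$ with $h'\in H$ and $k\in K$, so $g=(hh')k$. Setting $a=hh'\in H$ and $b=k\in K$, the relation $g=ab\in KL$ forces $a=gb^{-1}\in KL\cdot K=KL$ (again by normality of $L$), so $a\in H\cap KL$; symmetrically $g=ab\in HL$ forces $b=a^{-1}g\in H\cdot HL=HL$, so $b\in K\cap HL$. Thus $g\in(H\cap KL)(K\cap HL)$.

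I expect no real obstacle: the only ingredients are the normality of $L$, which is used to collapse products of the form $KLK$ to $KL$, and the factorization $G=HK$, which makes it possible to break the $L$-component of $g=hl$ into $H$- and $K$-parts. Note that the hypothesis that neither $H$ nor $K$ contains $L$ is not actually needed for this argument; it is presumably retained because the lemma is invoked within the framework for factorizations of almost simple groups developed earlier in the section.
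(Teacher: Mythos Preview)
Your proof is correct. Both inclusions are argued cleanly: the easy direction uses only that $L\trianglelefteq G$ to collapse $KLK$ to $KL$, and for the hard direction you correctly exploit $L\subseteq HK$ to split the $L$-part of $g=hl$ as $h'k$, after which the verifications $a\in H\cap KL$ and $b\in K\cap HL$ go through exactly as you wrote. Your observation that the hypothesis ``$H,K$ do not contain $L$'' is not used is also accurate.

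As for comparison with the paper: there is nothing to compare, since the paper does not supply its own proof of this lemma but simply quotes it from \cite[Lemma~2(i)]{LPS1996}. Your argument is in fact the standard one and is essentially how the result is established in that reference.
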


The following lemma implies that we may consider specific representatives of a conjugacy class of subgroups when studying factorizations of a group.

\begin{lemma}\label{LemXia04}
Let $G=HK$ be a factorization. Then for all $x,y\in G$ we have $G=H^xK^y$ with $H^x\cap K^y\cong H\cap K$.
\end{lemma}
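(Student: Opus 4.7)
The plan is to establish the two claims using only the defining identity $G=HK$ together with elementary manipulation of conjugates; the argument is essentially a refinement of the calculation already carried out in Lemma~\ref{LemXia03}. The first claim, $G=H^xK^y$, follows by exactly that calculation: since $xy^{-1}\in G=HK$, I would write $xy^{-1}=hk$ with $h\in H$ and $k\in K$, and then expand
\[
H^xK^y \;=\; x^{-1}Hxy^{-1}Ky \;=\; x^{-1}(Hh)(kK)y \;=\; x^{-1}HKy \;=\; x^{-1}Gy \;=\; G.
\]

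For the isomorphism $H^x\cap K^y\cong H\cap K$, the key trick is to rewrite both $H^x$ and $K^y$ as conjugates by a \emph{common} element. Using the same decomposition $xy^{-1}=hk$, I would observe first that $H^x=H^{h^{-1}x}$ because $h\in H$, and then note that the relation $xy^{-1}=hk$ rearranges to $y=k^{-1}h^{-1}x$; together with $k\in K$ this yields $K^y=K^{h^{-1}x}$. Consequently
\[
H^x\cap K^y \;=\; H^{h^{-1}x}\cap K^{h^{-1}x} \;=\; (H\cap K)^{h^{-1}x},
\]
so the intersection $H^x\cap K^y$ is in fact $G$-conjugate to $H\cap K$ and hence isomorphic to it.

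No step here is genuinely hard; the whole lemma is a bookkeeping exercise exploiting the trivial fact that conjugation by an element of a subgroup fixes that subgroup setwise. Its role in the paper, as made explicit in the remark immediately preceding it, is to license the study of factorizations of $G$ via any convenient representatives of the conjugacy classes of the factors, since the isomorphism type of $H\cap K$ is a conjugation invariant of the pair.
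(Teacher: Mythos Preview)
Your proof is correct and follows essentially the same approach as the paper: both use the decomposition $xy^{-1}=hk$ with $h\in H$, $k\in K$, and both show that $H^x\cap K^y$ is a conjugate of $H\cap K$. The only cosmetic difference is that the paper first conjugates by $y^{-1}$ to reduce to $H^{xy^{-1}}\cap K=H^{hk}\cap K=H^k\cap K=(H\cap K)^k$, whereas you directly rewrite both factors as conjugates by the common element $h^{-1}x$; since $ky=h^{-1}x$, the two computations yield the very same conjugating element.
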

  
\begin{proof}
As $xy^{-1}\in G=HK$, there exists $h\in H$ and $k\in K$ such that $xy^{-1}=hk$. Thus
\[
H^xK^y=x^{-1}Hxy^{-1}Ky=x^{-1}HhkKy=x^{-1}HKy=x^{-1}Gy=G,
\]
and
\[
H^x\cap K^y=(H^{xy^{-1}}\cap K)^y\cong H^{xy^{-1}}\cap K=H^{hk}\cap K=H^k\cap K=(H\cap K)^k\cong H\cap K.\qedhere
\]
\end{proof}

\section{Notation}

Throughout this paper, let $q=p^f$ be a power of a prime $p$, let $m\geqslant4$ be an integer, let $\,\overline{\phantom{\varphi}}\,$ be the homomorphism from $\GaO_{2m}^-(q)$ to $\mathrm{P\Gamma O}_{2m}^-(q)$ modulo scalars, let $V$ be a vector space of dimension $2m$ over $\bbF_q$ equipped with a nondegenerate quadratic form $Q$, whose associated bilinear form is $\beta$, let $V_\sharp$ be a vector space of dimension $m$ over $\bbF_{q^2}$ with the same underlying set as $V$, and let $\Tr$ be the trace of the field extension $\bbF_{q^2}/\bbF_q$. 
For $w\in V$ with $Q(w)\neq0$, let $r_w$ be the reflection in $w$ defined by
\[
r_w\colon V\to V,\quad v\mapsto v-\frac{\beta(v,w)}{Q(w)}w.
\]

We fix the notation for some field extension subgroups of $\mathrm{O}(V)$ as follows, according to the parity of $m$. 

First assume that $m=2\ell+1$ is odd. In this case we may equip $V_\sharp$ with a nondegenerate unitary form $\beta_\sharp$ such that $Q(v)=\beta_\sharp(v,v)$ for all $v\in V$, and thus $\GU(V_\sharp)<\mathrm{O}(V)$. Take a standard $\bbF_{q^2}$-basis $E_1,F_1,\dots,E_\ell,F_\ell,D$ for $V_\sharp$, so that
\[
\beta_\sharp(E_i,E_j)=\beta_\sharp(F_i,F_j)=\beta_\sharp(E_i,D)=\beta_\sharp(F_i,D)=0,\quad\beta_\sharp(E_i,F_j)=\delta_{i,j},\quad\beta_\sharp(D,D)=1
\]
for all $i,j\in\{1,\dots,\ell\}$. Let $\psi\in\GaU(V_\sharp)$ such that 
\[
\psi\colon a_1E_1+b_1F_1+\dots+a_\ell E_\ell+b_\ell F_\ell+cD\mapsto a_1^pE_1+b_1^pF_1+\dots+a_\ell^pE_\ell+b_\ell^pF_\ell+c^pD
\]
for $a_1,b_1\dots,a_\ell,b_\ell,c\in\bbF_{q^2}$, and let $\lambda\in\bbF_{q^2}$ such that $\lambda+\lambda^q=1$. Then we have
\begin{align}
&Q(\lambda E_1)=\beta_\sharp(\lambda E_1,\lambda E_1)=0,\label{EqnOmegaMinus01}\\
&Q(F_1)=\beta_\sharp(F_1,F_1)=0,\label{EqnOmegaMinus02}\\
&Q(\lambda E_1+F_1)=\beta_\sharp(\lambda E_1+F_1,\lambda E_1+F_1)=\lambda+\lambda^q=1,\nonumber
\end{align}
and hence
\begin{equation}\label{EqnOmegaMinus03}
\beta(\lambda E_1,F_1)=Q(\lambda E_1+F_1)-Q(\lambda E_1)-Q(F_1)=1. 
\end{equation}
From~\eqref{EqnOmegaMinus01}--\eqref{EqnOmegaMinus03} we see that $(\lambda E_1,F_1)$ is a hyperbolic pair with respect to $Q$. 
Thus there exists a standard basis $e_1,f_1,\dots,e_{m-1},f_{m-1},d,d'$ for $V$ as in~\cite[2.2.3]{LPS1990} such that 
\[
e_1=\lambda E_1\ \text{ and }\ f_1=F_1. 
\]
Then $Q(e_1+f_1)=\beta_\sharp(e_1+f_1,e_1+f_1)=1$, which implies that $e_1+f_1$ is a nonsingular vector in both $V$ (with respect to $Q$) and $V_\sharp$ (with respect to $\beta_\sharp$). Let $\phi\in\GaO(V)$ be as defined in~\cite[\S2.8]{KL1990} with respect to the basis $e_1,f_1,\dots,e_{m-1},f_{m-1},d,d'$. 
Then $\phi$ fixes $e_1,f_1,\dots,e_{m-1},f_{m-1},d$, and commutes with $r_{e_1+f_1}$.

Next assume that $m=2\ell$ is even. In this case we may equip $V_\sharp$ with a nondegenerate quadratic form $Q_\sharp$ of minus type such that $Q(v)=\Tr(Q_\sharp(v))$ for all $v\in V$, and thus $\mathrm{O}(V_\sharp)<\mathrm{O}(V)$. Let $\beta_\sharp$ be the associated symmetric bilinear linear form of $Q_\sharp$, and take a standard $\bbF_{q^2}$-basis $E_1,F_1,\dots,E_{\ell-1},F_{\ell-1},D,D'$ for $V_\sharp$, so that
\begin{align*}
&Q_\sharp(E_i)=Q_\sharp(F_i)=0,\quad Q_\sharp(D)=1=\beta_\sharp(D,D'),\quad Q_\sharp(D')=\mu,\quad\beta_\sharp(E_i,F_j)=\delta_{i,j},\\
&\beta_\sharp(E_i,E_j)=\beta_\sharp(F_i,F_j)=\beta_\sharp(E_i,D)=\beta_\sharp(F_i,D)=\beta_\sharp(E_i,D')=\beta_\sharp(F_i,D')=0
\end{align*}
for all $i,j\in\{1,\dots,\ell-1\}$, where $\mu\in\bbF_{q^2}$ such that $x^2+x+\mu$ is irreducible over $\bbF_{q^2}$.

\section{Infinite families of $(X,Y)$ in Table~\ref{TabOmegaMinus}}\label{SecOmegaMinus01}

In the first lemma we construct the factor pair $(X,Y)$ in Row~1 of Table~\ref{TabOmegaMinus}.

\begin{lemma}\label{LemOmegaMinus01}
Let $G=\Omega(V)=\Omega_{2m}^-(q)$ with odd $m$, let $H=G_{e_1+f_1}$, let $K=\SU(V_\sharp)$, let $Z=\overline{G}$, let $X=\overline{H}$, and let $Y=\overline{K}$. Then $H\cap K=\SU_{m-1}(q)$, and $Z=XY$ with $Z=\POm_{2m}^-(q)$, $X\cong H=\Omega_{2m-1}(q)$ and $Y\cong K=\SU_m(q)$.
\end{lemma}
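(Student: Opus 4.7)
The plan is to pin down $H$, $K$ and $H \cap K$ explicitly as classical groups, verify $G = HK$ by an order comparison, and then pass to the quotient modulo scalars to obtain $Z = XY$. First I would identify $H$. The setup gives $Q(e_1+f_1) = \beta_\sharp(e_1+f_1, e_1+f_1) = \lambda + \lambda^q = 1$, so $e_1+f_1$ is nonsingular and $W := \langle e_1+f_1\rangle^\perp$ is a nondegenerate $(2m{-}1)$-dimensional orthogonal $\bbF_q$-space. Elements of $\mathrm{O}(V)$ fixing $e_1+f_1$ restrict faithfully to $W$, inducing an isomorphism $\mathrm{O}(V)_{e_1+f_1} \cong \mathrm{O}_{2m-1}(q)$, and tracking the determinant and (for odd $q$) spinor-norm conditions under this isomorphism identifies $H = G_{e_1+f_1}$ with $\Omega_{2m-1}(q)$.

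Next I would identify $K$ and $H \cap K$. By construction $K = \SU(V_\sharp) \cong \SU_m(q)$, and since $m \geq 5$ the group $\SU_m(q)$ is perfect and therefore lies in the kernel of every character of $\mathrm{O}(V)$, so $K \leq G$. Thus $H \cap K = K_{e_1+f_1}$. Because $\beta_\sharp(e_1+f_1, e_1+f_1) = 1 \neq 0$, the vector $e_1+f_1 = \lambda E_1 + F_1$ is non-isotropic in $V_\sharp$, and the orthogonal decomposition $V_\sharp = \langle e_1+f_1\rangle \oplus \langle e_1+f_1\rangle^\perp$ identifies $K_{e_1+f_1}$ with $\SU_{m-1}(q)$ acting on the perpendicular $(m{-}1)$-dimensional nondegenerate unitary space.

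The order comparison is then immediate: using $(-1)^m = -1$ for odd $m$, standard formulas yield
$$\frac{|K|}{|H \cap K|} = \frac{|\SU_m(q)|}{|\SU_{m-1}(q)|} = q^{m-1}(q^m+1),$$
and multiplying by $|H| = |\Omega_{2m-1}(q)|$ recovers exactly $|\Omega_{2m}^-(q)| = |G|$. Since $HK \subseteq G$ and $|HK| = |H||K|/|H \cap K| = |G|$, we conclude $G = HK$. To pass to the quotient, observe that $Z(G) \subseteq \{\pm I\}$; the element $-I$ lies in neither $H$ (it negates $e_1+f_1$) nor $K$ (because $\det_{V_\sharp}(-I) = (-1)^m = -1$ for odd $m$). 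Hence $G \to Z = \overline{G}$ restricts to injections on $H$ and $K$, giving $X \cong H$ and $Y \cong K$; consequently $Z = \overline{G} = \overline{HK} = \overline{H}\cdot\overline{K} = XY$.

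The main obstacle is the determinant/spinor-norm bookkeeping in the very first step — specifically, checking that the subgroup of $\mathrm{O}(V)_{e_1+f_1} \cong \mathrm{O}_{2m-1}(q)$ cut out by the conditions defining $G = \Omega(V)$ inside $\mathrm{O}(V)$ is precisely the copy of $\Omega_{2m-1}(q)$ in $\mathrm{O}_{2m-1}(q)$. Everything else is routine order arithmetic or a quick check on scalars.
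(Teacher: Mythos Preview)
Your proposal is correct and follows essentially the same argument as the paper: identify $H=\Omega_{2m-1}(q)$ and $H\cap K=\SU_{m-1}(q)$ via the fact that $e_1+f_1$ is nonsingular for both $Q$ and $\beta_\sharp$, then check $|K|/|H\cap K|=q^{m-1}(q^m+1)=|G|/|H|$ to get $G=HK$ and pass to the quotient. The paper simply records these identifications as standard facts and invokes ``$m$ odd'' for $Y\cong K$, whereas you spell out the perfectness argument for $K\leqslant\Omega(V)$ and the $-I\notin H\cup K$ check for $X\cong H$, $Y\cong K$; the spinor-norm concern you flag is exactly the well-known fact that $\Omega(V)_w=\Omega(w^\perp)$ for nonsingular $w$, which the paper takes for granted.
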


\begin{proof}
It is clear that $Z=\POm_{2m}^-(q)$, and $Y\cong K=\SU_m(q)$ as $m$ is odd. Since $e_1+f_1$ is a nonsingular vector in both $V$ (with respect to $Q$) and $V_\sharp$ (with respect to $\beta_\sharp$), we have $X\cong H=G_{e_1+f_1}=\Omega_{2m-1}(q)$ and
\[
H\cap K=G_{e_1+f_1}\cap\SU(V_\sharp)=\SU(V_\sharp)_{e_1+f_1}=\SU_{m-1}(q).
\]
It follows that
\[
\frac{|K|}{|H\cap K|}=\frac{|\SU_m(q)|}{|\SU_{m-1}(q)|}=q^{m-1}(q^m+1)=\frac{|\Omega_{2m}^-(q)|}{|\Omega_{2m-1}(q)|}=\frac{|G|}{|H|},
\]
and so $G=HK$. Hence $Z=\overline{G}=\overline{H}\,\overline{K}=XY$.
\end{proof}

The second lemma is on the factor pair $(X,Y)$ in Row~2 of Table~\ref{TabOmegaMinus}.

\begin{lemma}\label{LemOmegaMinus02}
Let $G=\Omega(V)=\Omega_{2m}^-(q)$ with odd $m$, let $H=G_{E_1}$, let $K=\SU(V_\sharp)$, let $Z=\overline{G}$, let $X=\overline{H}$, and let $Y=\overline{K}$. Then $H\cap K=(q.q^{2m-4}){:}\SU_{m-2}(q)$, and $Z=XY$ with $Z=\POm_{2m}^-(q)$, $X\cong H=q^{2m-2}{:}\Omega_{2m-2}^-(q)$ and $Y\cong K=\SU_m(q)$.
\end{lemma}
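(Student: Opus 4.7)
The plan is to follow the template of Lemma~\ref{LemOmegaMinus01}: identify each of $Z$, $X$, $Y$, describe the intersection $H\cap K$ from its action on $V_\sharp$, and then conclude $G=HK$ by comparing indices, whence $Z=XY$ on passing modulo scalars.

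The identification of the ambient groups is immediate. Since $m$ is odd, the only $\bbF_q$-scalar lying in $\SU(V_\sharp)$ is the identity (a scalar $\zeta\in\bbF_q^*$ with $\zeta^{q+1}=1$ satisfies $\zeta^2=1$, and then $\zeta^m=1$ with $m$ odd forces $\zeta=1$), so $Y\cong K=\SU_m(q)$; likewise no nontrivial scalar fixes $E_1$, so $X\cong H$, and $Z=\overline{G}=\POm_{2m}^-(q)$. For $H$ itself, $e_1=\lambda E_1$ gives $H=G_{E_1}=G_{e_1}$, the stabiliser in $\Omega_{2m}^-(q)$ of a singular basis vector, which has the standard shape $q^{2m-2}{:}\Omega_{2m-2}^-(q)$.

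Since $\SU(V_\sharp)\le\mathrm{O}(V)$ acts $\bbF_{q^2}$-linearly on $V_\sharp$, fixing $E_1$ inside $V$ coincides with fixing it inside $V_\sharp$, so
\[
H\cap K=G_{E_1}\cap\SU(V_\sharp)=\SU(V_\sharp)_{E_1}.
\]
As $E_1$ is isotropic in $V_\sharp$ this is the stabiliser of an isotropic vector in $\SU_m(q)$. Standard parabolic structure in $\SU_m(q)$ shows that the maximal parabolic $P_1$ fixing $\langle E_1\rangle_{\bbF_{q^2}}$ has unipotent radical a special $p$-group of order $q^{2m-3}$ with centre of order $q$ (hence of shape $q.q^{2m-4}$) and Levi complement $\GL_1(q^2)\times\SU_{m-2}(q)$; stabilising the vector $E_1$ rather than its span kills the $\GL_1(q^2)$ factor, leaving $\SU(V_\sharp)_{E_1}=(q.q^{2m-4}){:}\SU_{m-2}(q)$, as claimed.

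Finally, using $m$ odd so that $q^m-(-1)^m=q^m+1$ and $q^{m-1}-(-1)^{m-1}=q^{m-1}-1$, the indices balance:
\[
\frac{|K|}{|H\cap K|}=\frac{|\SU_m(q)|}{q^{2m-3}|\SU_{m-2}(q)|}=(q^{m-1}-1)(q^m+1)=\frac{|\Omega_{2m}^-(q)|}{q^{2m-2}|\Omega_{2m-2}^-(q)|}=\frac{|G|}{|H|},
\]
so $G=HK$ and hence $Z=\overline{G}=\overline{H}\,\overline{K}=XY$. The main obstacle is the precise identification $\SU(V_\sharp)_{E_1}=(q.q^{2m-4}){:}\SU_{m-2}(q)$: recognising the normal $p$-part as a Heisenberg-type central extension of order $q^{2m-3}$ (rather than elementary abelian) requires either a citation of the parabolic structure of $\SU_m(q)$ from Kleidman--Liebeck or a short direct matrix computation using a hyperbolic basis $E_1,F_1,E_2,F_2,\ldots,D$ of $V_\sharp$; the index comparison that yields $G=HK$ is routine by comparison.
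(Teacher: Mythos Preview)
Your argument follows essentially the same route as the paper's proof: verify that $E_1$ is singular in both $V$ and $V_\sharp$, identify $H\cap K=\SU(V_\sharp)_{E_1}$, and compare indices to obtain $G=HK$.

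One small slip: the equality $G_{E_1}=G_{e_1}$ is not generally correct. When $q$ is even the scalar $\lambda$ with $\lambda+\lambda^q=1$ lies in $\bbF_{q^2}\setminus\bbF_q$, so $E_1$ and $e_1=\lambda E_1$ span distinct $\bbF_q$-lines in $V$, and an element of $\Omega(V)$ fixing $e_1$ need not fix $E_1$ (it is only $\bbF_q$-linear, not $\bbF_{q^2}$-linear). The detour through $e_1$ is unnecessary anyway: as the paper does, simply compute $Q(E_1)=\beta_\sharp(E_1,E_1)=0$ to see that $E_1$ itself is a singular vector in $V$, whence $H=G_{E_1}=q^{2m-2}{:}\Omega_{2m-2}^-(q)$ directly. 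With that correction your proof is complete and matches the paper's.
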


\begin{proof}
It is clear that $Z=\POm_{2m}^-(q)$. Since $m$ is odd, we have $Y\cong K=\SU_m(q)$. Since $Q(E_1)=\beta_\sharp(E_1,E_1)=0$, we see that $E_1$ is a singular vector in both $V$ (with respect to $Q$) and $V_\sharp$ (with respect to $\beta_\sharp$). Thus $X\cong H=G_{E_1}=q^{2m-2}{:}\Omega_{2m-2}^-(q)$ and
\[
H\cap K=G_{E_1}\cap\SU(V_\sharp)=\SU(V_\sharp)_{E_1}=(q.q^{2m-4}){:}\SU_{m-2}(q).
\]
It follows that
\[
\frac{|K|}{|H\cap K|}=\frac{|\SU_m(q)|}{|(q.q^{2m-4}){:}\SU_{m-2}(q)|}=(q^m+1)(q^{m-1}-1)=\frac{|\Omega_{2m}^-(q)|}{|q^{2m-2}{:}\Omega_{2m-2}^-(q)|}=\frac{|G|}{|H|},
\]
and so $G=HK$. Therefore, $Z=\overline{G}=\overline{H}\,\overline{K}=XY$.
\end{proof}

In the following two lemmas we construct the factor pairs $(X,Y)$ in Rows~3--5 of Table~\ref{TabOmegaMinus}.
Recall the notation of the field extension subgroups of $\mathrm{O}(V)$ for odd $m$ given at the start of this section.

\begin{lemma}\label{LemOmegaMinus03}
Let $Z=\Omega(V)=\Omega_{2m}^-(2)$ with $q=2$ and $m$ odd, let $X=Z_{\{e_1,f_1\}}$, and let $Y=\SU(V_\sharp)$. Then $Z=XY$ with $X=\Omega_{2m-2}^-(2).2$, $Y=\SU_m(2)$ and $X\cap Y=\SU_{m-2}(2)$.
\end{lemma}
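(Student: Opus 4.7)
The plan is to identify the three groups $X$, $Y$, and $X \cap Y$ explicitly, and then verify $Z = XY$ via an order count.

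To identify $X = Z_{\{e_1, f_1\}}$, I would first note that by \eqref{EqnOmegaMinus01}--\eqref{EqnOmegaMinus03} the pair $(e_1, f_1) = (\lambda E_1, F_1)$ is a hyperbolic pair for $Q$, so $U := \langle e_1, f_1\rangle^\perp$ is a nondegenerate minus-type quadratic space of dimension $2m-2$, and the pointwise stabilizer of $(e_1, f_1)$ in $\mathrm{O}(V)$ is $\mathrm{O}(U) = \mathrm{O}_{2m-2}^-(2)$. Since $Q(e_1+f_1) = 1$, the reflection $t := r_{e_1+f_1}$ is defined; a direct check of the formula for $r_w$ shows that $t$ swaps $e_1, f_1$ and fixes $U$ pointwise (using $\beta(v, e_1+f_1) = 0$ for $v \in U$ and working in characteristic $2$). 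Therefore $\mathrm{O}(V)_{\{e_1, f_1\}} = \mathrm{O}(U) \times \langle t\rangle$, and intersecting with $\Omega(V)$ via the Dickson invariant---which is $0$ on $\Omega_{2m-2}^-(2)$ and $1$ on the reflection $t$---yields the index-$2$ diagonal subgroup $\{(g, t^{D_U(g)}) : g \in \mathrm{O}(U)\}$, which is isomorphic to $\mathrm{O}(U) = \Omega_{2m-2}^-(2).2$. This is $X$.

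Since $m$ is odd, $Y = \SU(V_\sharp) = \SU_m(2)$ directly from the setup. For $X \cap Y = \SU(V_\sharp)_{\{e_1, f_1\}}$, I would split into two cases. An element of $\SU(V_\sharp)$ fixing both $e_1$ and $f_1$ also fixes $E_1 = \lambda^{-1}e_1$ and $F_1 = f_1$; as $\langle E_1, F_1\rangle_{\bbF_{q^2}}$ is a hyperbolic plane for $\beta_\sharp$, the pointwise stabilizer equals $\SU_{m-2}(2)$. For the swap case, suppose $g \in \SU(V_\sharp)$ sends $e_1 \mapsto f_1$ and $f_1 \mapsto e_1$; $\bbF_{q^2}$-linearity forces $g(E_1) = \lambda^{-1}F_1$ and $g(F_1) = \lambda E_1$, giving
\[
\beta_\sharp(g(E_1), g(F_1)) = \lambda^{-1}\bar\lambda\,\beta_\sharp(F_1, E_1) = \lambda^{q-1},
\]
which must equal $\beta_\sharp(E_1, F_1) = 1$; this forces $\lambda \in \bbF_q = \bbF_2$, contradicting $\lambda + \lambda^q = 1$. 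Hence no such $g$ exists and $X \cap Y = \SU_{m-2}(2)$.

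Finally, I would verify the order identity $|Z||X \cap Y| = |X||Y|$. Using the standard formulas for $|\Omega_{2m}^-(2)|$, $|\Omega_{2m-2}^-(2)|$, $|\SU_m(2)|$, $|\SU_{m-2}(2)|$, the powers of $2$ cancel and the identity reduces to $(2^m+1)(2^{2m-2}-1) = (2^{m-1}+1)(2^{m-1}-1)(2^m+1)$, which is immediate from $2^{2m-2}-1 = (2^{m-1}-1)(2^{m-1}+1)$. Since $XY \subseteq Z$ and $|XY| = |X||Y|/|X \cap Y| = |Z|$, we conclude $Z = XY$. The main obstacle is the computation of $X \cap Y$: ruling out a swap of $e_1, f_1$ inside $\SU(V_\sharp)$ hinges on $\lambda \notin \bbF_2$, a feature specific to $q = 2$ which is precisely what separates this row of Table~\ref{TabOmegaMinus} from Rows~1 and~2.
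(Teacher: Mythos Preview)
Your proof is correct and follows essentially the same approach as the paper: identify $X$, $Y$, rule out a swap of $e_1,f_1$ inside $\SU(V_\sharp)$ via sesquilinearity of $\beta_\sharp$, deduce $X\cap Y=\SU_{m-2}(2)$, then finish by an order count. Your treatment of $X$ via the Dickson invariant on $\mathrm{O}(U)\times\langle t\rangle$ is more detailed than the paper's, which simply asserts $Z_{e_1,f_1}=\Omega_{2m-2}^-(2)$ and that the setwise stabilizer adds a swap $s$; both are valid.

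One small correction to your closing commentary: the swap argument is \emph{not} what singles out $q=2$. Your computation $\beta_\sharp(g(E_1),g(F_1))=\lambda^{q-1}=1\Rightarrow\lambda\in\bbF_q$ contradicts $\lambda+\lambda^q=1$ for every even $q$, so the swap is excluded uniformly. What is genuinely special to $q=2$ is the order count: $|Z|/|X|=q^{2m-2}(q^m+1)(q^{m-1}-1)/2$ while $|Y|/|X\cap Y|=q^{2m-3}(q^m+1)(q^{m-1}-1)$, and these agree precisely when $q=2$. (The analogous Rows~4--5 require the extra outer factor $2f$ on both sides to make the count work for $q=2,4$.)
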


\begin{proof}
Clearly, $Y=\SU_m(2)$. Since $(e_1,f_1)$ is a hyperbolic pair with respect to $Q$, we have $Z_{e_1,f_1}=\Omega_{2m-2}^-(q)$ and $Z_{\{e_1,f_1\}}=Z_{e_1,f_1}{:}\langle s\rangle$ for some $s$ swapping $e_1$ and $f_1$. In particular, $X=\Omega_{2m-2}^-(2).2$.

Suppose that $Y_{\{e_1,f_1\}}\neq Y_{e_1,f_1}$, that is, there exists $t\in Y$ swapping $e_1=\lambda E_1$ and $f_1=F_1$. Then as $t\in Y=\SU(V_\sharp)$, we obtain
\[
\lambda=\beta_\sharp(\lambda E_1,F_1)=\beta_\sharp((\lambda E_1)^t,F_1^t)=\beta_\sharp(F_1,\lambda E_1)=\lambda^q,
\]
contradicting the condition $\lambda+\lambda^q=1$. 

Thus we conclude that $Y_{\{e_1,f_1\}}=Y_{e_1,f_1}$. Consequently, 
\[
X\cap Y=Z_{\{e_1,f_1\}}\cap Y=Y_{\{e_1,f_1\}}=Y_{e_1,f_1}=\SU(V_\sharp)_{e_1,f_1}=\SU(V_\sharp)_{E_1,F_1}=\SU_{m-2}(2).
\]
As $q=2$, we then derive that
\[
\frac{|Y|}{|X\cap Y|}=\frac{|\SU_m(2)|}{|\SU_{m-2}(2)|}=2^{2m-3}(2^m+1)(2^{m-1}-1)=\frac{|\Omega_{2m}^-(2)|}{|\Omega^-_{2m-2}(2).2|}=\frac{|Z|}{|X|},
\]
which yields $Z=XY$.
\end{proof}

\begin{lemma}\label{LemOmegaMinus04}
Let $Z=\Omega(V){:}\langle\rho\rangle$ with $q\in\{2,4\}$, $m$ odd and $\rho\in\{\phi,(r_{e_1+f_1})^{2/f}\phi\}$, let $X=\Omega(V)_{e_1,f_1}{:}\langle\rho\rangle$, and let $Y=\SU(V_\sharp){:}\langle\psi\rangle$. Then $Z=XY$ with $Z=\GaO_{2m}^-(q)$, $X=\Omega_{2m-2}^-(q).(2f)$, $Y=\SU_m(q).(2f)$ and $X\cap Y=\SU_{m-2}(q)$.
\end{lemma}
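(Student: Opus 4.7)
My plan is to extend the proof of Lemma~\ref{LemOmegaMinus03} by carrying along the field automorphisms $\rho$ and $\psi$. I will verify the three group structures, compute $X \cap Y$ by analyzing setwise stabilizers in $Y$, and deduce $Z = XY$ from an index identity. A key ingredient is that $Z/\Omega(V) \cong \Z/2f\Z$ is cyclic, generated by either $\phi\Omega(V)$ or $\psi\Omega(V)$, together with the observation that $q = 2f$ for $q \in \{2,4\}$.

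First, $\psi$ has order $2f$ and lies outside $\SU(V_\sharp)$, so $Y \cong \SU_m(q).(2f)$. The element $\phi$ has order $2f$ modulo $\Omega(V)$ by construction in \cite[\S2.8]{KL1990}, and since $r_{e_1+f_1}$ commutes with $\phi$, the alternative $\rho = r_{e_1+f_1}^{2/f}\phi$ also has order $2f$ modulo $\Omega(V)$. Hence in either case $|Z| = |\GaO_{2m}^-(q)|$, so $Z = \GaO_{2m}^-(q)$. The element $\rho$ setwise stabilizes $\{e_1,f_1\}$---pointwise when $\rho = \phi$, swapping when $\rho = r_{e_1+f_1}\phi$---so normalizes $\Omega(V)_{e_1,f_1} \cong \Omega_{2m-2}^-(q)$, giving $X \cong \Omega_{2m-2}^-(q).(2f)$. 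The inclusion $\SU_{m-2}(q) = \SU(V_\sharp)_{E_1,F_1} = \Omega(V)_{e_1,f_1}\cap\SU(V_\sharp) \subseteq X \cap Y$ is immediate as in the proof of Lemma~\ref{LemOmegaMinus03}.

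For the reverse inclusion $X \cap Y \subseteq \SU_{m-2}(q)$, every element of $X$ setwise stabilizes $\{e_1,f_1\}$, so it suffices to compute $Y_{\{e_1,f_1\}}$. For $h\psi^j \in Y$ to stabilize $\{e_1,f_1\}$, the identities $\psi^j(F_1)=F_1$ and $\psi^j(\lambda E_1)=\lambda^{p^j}E_1$, combined with $\bbF_{q^2}$-linearity and unitarity of $h$, reduce to $\lambda^{p^j-1}=1$ (fix case) or $\lambda^{p^j-q}=1$ (swap case). Since the four solutions to $x+x^q=1$ in $\bbF_{q^2}$ are Frobenius-conjugates of a primitive element of $\bbF_{q^2}^*$---of order $3$ for $q=2$, and of order $15$ for $q=4$ via roots of $x^4+x+1$---only $j=0$ (fix) and $j=f$ (swap, using $p^f=q$) satisfy these. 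Thus $Y_{e_1,f_1} = \SU_{m-2}(q)$, and the swap elements of $Y_{\{e_1,f_1\}}$ lie in the coset $\psi^f\Omega(V)$, which equals $\phi^f\Omega(V)$---the unique element of order~$2$ in $Z/\Omega(V)$. For $\rho = \phi$ the group $X$ contains no swap elements; for $\rho = r_{e_1+f_1}\phi$ with $q = 4$, using $r_{e_1+f_1} \equiv \phi^f = \phi^2 \pmod{\Omega(V)}$ (both being the non-trivial coset of $\GO(V)/\Omega(V)$), the swap elements of $X$ lie in $\rho\Omega(V) = \phi^3\Omega(V)$ and $\rho^3\Omega(V) = \phi\Omega(V)$, both distinct from $\phi^2\Omega(V)$. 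Hence $X \cap Y = \SU_{m-2}(q)$, and the index identity $|X||Y|/|Z| = (2f/q)|\SU_{m-2}(q)| = |\SU_{m-2}(q)|$ yields $Z = XY$.

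The hardest part will be the coset identifications $r_{e_1+f_1} \equiv \phi^f \equiv \psi^f \pmod{\Omega(V)}$, which reduce to showing all three elements have Dickson invariant~$1$. For $r_{e_1+f_1}$ this is immediate (reflections have $1$-dimensional $(-1)$-eigenspace); for $\psi^f$, the $(-1)$-eigenspace on $V$ is the $\bbF_q$-span of $E_1,F_1,\ldots,E_\ell,F_\ell,D$, of odd dimension $m$; for $\phi^f$, this follows because $\phi$ is chosen in \cite[\S2.8]{KL1990} so that $\phi^f$ generates the non-trivial coset of $\GO(V)/\Omega(V)$.
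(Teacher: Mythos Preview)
Your argument is correct (modulo the slip that $x+x^q=1$ has two solutions in $\bbF_{q^2}$ when $q=2$, not four), but it takes a more laborious route than the paper's. You compute the full setwise stabilizer $Y_{\{e_1,f_1\}}$ via the conditions $\lambda^{p^j-1}=1$ and $\lambda^{p^j-q}=1$, and then do a coset-by-coset case analysis on $\rho$ in $Z/\Omega(V)$ to rule out swap elements. The paper instead observes that in \emph{both} cases one has $\rho^f=\phi^f$ (since $r_{e_1+f_1}$ has order~$2$ and commutes with $\phi$, so $(r_{e_1+f_1}\phi)^f=r_{e_1+f_1}^f\phi^f=\phi^f$ when $f=2$). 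Thus if $X\cap Y\nleqslant\Omega(V)$, the cyclic structure of $Z/\Omega(V)$ forces an element $\phi^fs=\psi^ft$ in $X\cap Y$ with $s\in\Omega(V)_{e_1,f_1}$ and $t\in\SU(V_\sharp)$; since $\phi^fs$ fixes $e_1,f_1$ pointwise, a single $\beta_\sharp$-computation gives $\lambda=\lambda^q$, a contradiction. This eliminates your case split on $\rho$ and the separate coset bookkeeping entirely. Your approach has the virtue of making $Y_{\{e_1,f_1\}}$ completely explicit, while the paper's is shorter because it only needs the unique order-$2$ coset and the identity $\rho^f=\phi^f$.
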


\begin{proof}
Clearly, $Z=\GaO_{2m}^-(q)$, $X=\Omega_{2m-2}^-(q).(2f)$, and $Y=\SU_m(q).(2f)$. 

Suppose that $X\cap Y\nleqslant\Omega(V)$. Then there exist $s\in\Omega(V)_{e_1,f_1}$ and $t\in\SU(V_\sharp)$ such that $\phi^fs=\psi^ft$. Since $\phi$ and $s$ both fix $e_1$ and $f_1$, it follows that $\phi^fs$ fixes $e_1=\lambda E_1$ and $f_1=F_1$. This together with $t\in\SU(V_\sharp)$ implies that
\begin{align*}
\lambda=\beta_\sharp(\lambda E_1,F_1)&=\beta_\sharp((\lambda E_1)^{\phi^fs},F_1^{\phi^fs})\\
&=\beta_\sharp((\lambda E_1)^{\psi^ft},F_1^{\psi^ft})=\beta_\sharp((\lambda^qE_1)^t,F_1^t)=\beta_\sharp(\lambda^qE_1,F_1)=\lambda^q,
\end{align*}
contradicting the condition $\lambda+\lambda^q=1$. 

Thus we conclude that $X\cap Y\leqslant\Omega(V)$. Accordingly, 
\[
X\cap Y=(X\cap\Omega(V))\cap Y=\Omega(V)_{e_1,f_1}\cap Y=\SU(V_\sharp)_{e_1,f_1}=\SU(V_\sharp)_{E_1,F_1}=\SU_{m-2}(q).
\]
Observe $q=2f$ as $q\in\{2,4\}$. We then derive that
\[
\frac{|Y|}{|X\cap Y|}=\frac{|\SU_m(q).(2f)|}{|\SU_{m-2}(q)|}=2fq^{2m-3}(q^m+1)(q^{m-1}-1)=\frac{|\GaO^-_{2m}(q)|}{|\Omega_{2m-2}^-(q).(2f)|}=\frac{|Z|}{|X|}.
\]
This yields $Z=XY$.
\end{proof}

In the next lemma we construct the factor pairs $(X,Y)$ in Rows~6--7 of Table~\ref{TabOmegaMinus}.
Recall the notation of the field extension subgroups of $\mathrm{O}(V)$ for even $m$ given at the start of this section.

\begin{lemma}\label{LemOmegaMinus05}
Let $Z=\GaO(V)=\GaO_{2m}^-(q)$ with $q\in\{2,4\}$ and $m$ even, let $X=\GaO(V_\sharp)$, and let $Y=Z_{D'}$. Then $Z=XY$ with $X=\GaO_m^-(q^2)$, $Y=\Omega_{2m-1}(q).(2f)$ and $X\cap Y=\Omega_{m-1}(q^2).2$.
\end{lemma}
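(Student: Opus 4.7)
The plan mirrors Lemmas~\ref{LemOmegaMinus01}--\ref{LemOmegaMinus04}: first pin down the isomorphism types of $X$, $Y$, and $X\cap Y$, and then deduce $Z=XY$ from the identity $|X||Y|=|Z||X\cap Y|$. The identification $X=\GaO(V_\sharp)=\GaO_m^-(q^2)$ is immediate from the setup.

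For $Y=Z_{D'}$ I would first establish that $D'$ is nonsingular in $(V,Q)$. Since $Q(D')=\Tr(Q_\sharp(D'))=\Tr(\mu)$, the Artin--Schreier characterisation (in characteristic~2, irreducibility of $x^2+x+\mu$ over $\bbF_{q^2}$ is equivalent to $\Tr_{\bbF_{q^2}/\bbF_2}(\mu)=1$) together with transitivity of trace gives $\Tr_{\bbF_{q^2}/\bbF_q}(\mu)\neq 0$. Hence $\Omega(V)_{D'}=\Omega_{2m-1}(q)\cong\Sp_{2m-2}(q)$. To upgrade to $Y=\Omega_{2m-1}(q).(2f)$, I would observe that the reflection $r_{D'}$ fixes $D'$ in characteristic~2, and that a suitable field-automorphism element of $Z$ also fixes $D'$; together with $\Omega(V)_{D'}$ these account for the full group $Z_{D'}$, as confirmed by the index identity $[Z:Y]=[\Omega(V):\Omega_{2m-1}(q)]$.

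The determination of $X\cap Y=\GaO(V_\sharp)_{D'}$ is the heart of the argument. Since $Q_\sharp(D')=\mu\neq 0$, the vector $D'$ is nonsingular in $(V_\sharp,Q_\sharp)$, so $\Omega(V_\sharp)_{D'}=\Omega_{m-1}(q^2)$, and the $V_\sharp$-reflection $r'_{D'}$ enlarges this to $\GO(V_\sharp)_{D'}=\Omega_{m-1}(q^2).2$. The main obstacle is to rule out any additional contribution from $\GaO(V_\sharp)\setminus\GO(V_\sharp)$ fixing $D'$: any such element would induce a nontrivial $\sigma\in\Gal(\bbF_{q^2}/\bbF_p)$ satisfying $\sigma(\mu)=\mu$, but the condition $\Tr_{\bbF_{q^2}/\bbF_2}(\mu)=1$ forces $\mu\notin\bbF_q$ (as $\Tr_{\bbF_{q^2}/\bbF_2}$ vanishes on $\bbF_q$ in characteristic~2), and for $q\in\{2,4\}$ the field $\bbF_q$ is the unique maximal proper subfield of $\bbF_{q^2}$, so the Galois stabiliser of $\mu$ is trivial. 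This yields $X\cap Y=\Omega_{m-1}(q^2).2$.

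The proof concludes with the order comparison
\[
\frac{|Y|}{|X\cap Y|}=\frac{2f\,|\Omega_{2m-1}(q)|}{2\,|\Omega_{m-1}(q^2)|}=\frac{|Z|}{|X|},
\]
using the characteristic-2 formulas $|\Omega_{2m}^-(q)|/|\Omega_{2m-1}(q)|=q^{m-1}(q^m+1)$ and $|\Omega_m^-(q^2)|/|\Omega_{m-1}(q^2)|=q^{m-2}(q^m+1)$, giving $Z=XY$.
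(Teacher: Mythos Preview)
Your approach is correct and takes a genuinely different route from the paper's. The paper does \emph{not} compute $X\cap Y$ directly; instead it first observes that $X$ and $Y$ are maximal subgroups of $Z$ (the field-extension subgroup and an $\N_1$-subgroup), invokes~\cite[Theorem~A]{LPS1990} to obtain $Z=XY$ from the classification of maximal factorizations, and only then reads off $|X\cap Y|$ from the factorization, identifying $X\cap Y$ by exhibiting $\Omega(V_\sharp)_{D'}\langle r\rangle$ as a subgroup of the right order. You reverse the logic: you pin down $X\cap Y=\GaO(V_\sharp)_{D'}$ intrinsically via the Galois-theoretic observation that a semilinear isometry fixing $D'$ forces its field automorphism to fix $\mu=Q_\sharp(D')$, which for $q\in\{2,4\}$ is impossible since $\mu\notin\bbF_q$ and $\bbF_q$ is the unique maximal proper subfield of $\bbF_{q^2}$; the factorization $Z=XY$ then falls out of the order count using $q=2f$. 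Your argument is self-contained (no appeal to the maximal-factorization tables), matches the hands-on style of Lemmas~\ref{LemOmegaMinus01}--\ref{LemOmegaMinus04}, and makes transparent exactly where the restriction $q\in\{2,4\}$ enters. The paper's route is shorter but imports heavier machinery.

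One step to tighten: your assertion that ``a suitable field-automorphism element of $Z$ also fixes $D'$'' (needed to get $Y=\Omega_{2m-1}(q).(2f)$ when $q=4$) is stated without justification, and it is not automatic, since $Q(D')=\mu+\mu^q$ need not lie in the prime field. The paper sidesteps this by identifying $Z_{D'}$ with the subspace stabiliser $Z_{\langle D'\rangle_{\bbF_q}}=\N_1[Z]$; you should either argue along the same lines or exhibit the element explicitly.
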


\begin{proof}
It is clear that $X=\GaO_m^-(q^2)$ is maximal in $Z$. Since $x^2+x+\mu$ is irreducible, we have $\mu\notin\bbF_q$, that is, $\mu\neq\mu^q$. Then as $q$ is even, we obtain
\[
Q(D')=\Tr(Q_\sharp(D'))=\Tr(\mu)=\mu+\mu^q\neq0.
\] 
It follows that an element in $Z$ stabilizes $\langle D'\rangle_{\bbF_q}$ if and only if it fixes $D'$. Hence 
\[
Y=Z_{D'}=Z_{\langle D'\rangle_{\bbF_q}}=\N_1[Z]=\Omega_{2m-1}(q).(2f)
\]
is maximal in $Z$. Then~\cite[Theorem~A]{LPS1990} asserts that $Z=XY$, which implies
\[
|X\cap Y|=\frac{|X|}{|Z|/|Y|}=\frac{|\GaO_m^-(q^2)|}{|Z|/|\N_1[Z]|}=\frac{|\GaO_m^-(q^2)|}{|\Omega_{2m}^-(q)|/|\Omega_{2m-1}(q)|}=\frac{|\GaO_m^-(q^2)|}{q^{m-1}(q^m+1)}=2|\Omega_{m-1}(q)|
\]
as $q=2f$. Let $r$ be the reflection in $V_\sharp$ with respect to the vector $D'$. Then $r\in\mathrm{O}(V_\sharp)\setminus\Omega(V_\sharp)$, and $r$ fixes $D'$. This implies that 
\[
X\cap Y=\GaO(V_\sharp)\cap Z_{D'}\geqslant\Omega(V_\sharp){:}\langle r\rangle=\Omega_{m-1}(q^2).2,
\]
which together with $|X\cap Y|=2|\Omega_{m-1}(q)|$ leads to $X\cap Y=\Omega(V_\sharp){:}\langle r\rangle=\Omega_{m-1}(q^2).2$.
\end{proof}

Now we give the factor pairs $(X,Y)$ in Rows~8--9 of Table~\ref{TabOmegaMinus}.

\begin{lemma}\label{LemOmegaMinus06}
Let $Z=\GaO(V)=\GaO_{2m}^-(q)$ with $q\in\{2,4\}$ and $m=2\ell$ for some odd $\ell$, let $X=\SU_\ell(q^2){:}(4f)<\GaO_m^-(q^2)<Z$, and let $Y=Z_{D'}$. Then $Z=XY$ with $Y=\Omega_{2m-1}(q).(2f)$ and $X\cap Y=\SU_{\ell-1}(q^2).2$.
\end{lemma}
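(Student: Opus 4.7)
The plan is to deduce this lemma from Lemma~\ref{LemOmegaMinus05} by reducing the factorization $Z=XY$ to an inner factorization of $\GaO(V_\sharp)=\GaO_m^-(q^2)$, and then to verify the latter by applying Lemma~\ref{LemOmegaMinus01} over the quadratic field extension $\bbF_{q^4}/\bbF_{q^2}$. The value $Y=\Omega_{2m-1}(q).(2f)$ is inherited immediately from Lemma~\ref{LemOmegaMinus05}, which also supplies
\[
Z=\GaO(V_\sharp)Y\quad\text{and}\quad\GaO(V_\sharp)\cap Y=\Omega(V_\sharp)_{D'}{:}\langle r\rangle\cong\Omega_{m-1}(q^2).2,
\]
where $r$ is the reflection in $D'$ inside $V_\sharp$. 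Since $X\leqslant\GaO(V_\sharp)$, Dedekind's modular law gives $\GaO(V_\sharp)\cap XY=X(\GaO(V_\sharp)\cap Y)$, so that $Z=XY$ is equivalent to
\[
\GaO(V_\sharp)=X\cdot(\Omega(V_\sharp)_{D'}{:}\langle r\rangle).
\]

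To verify this inner equality, view $V_\sharp$ as an $\ell$-dimensional vector space $V_\flat$ over $\bbF_{q^4}$ carrying a nondegenerate Hermitian form $\beta_\flat$ whose diagonal trace to $\bbF_{q^2}$ recovers $Q_\sharp$; this is precisely the standard embedding $\SU_\ell(q^2)<\Omega_m^-(q^2)$ realising the unitary subgroup of $X$. Since $\ell$ is odd, Lemma~\ref{LemOmegaMinus01} applied with base field $\bbF_{q^2}$ and dimension $2\ell=m$ yields
\[
\Omega_m^-(q^2)=\SU(V_\flat)\cdot\Omega(V_\sharp)_v
\]
for a suitable nonsingular $v\in V_\sharp$; by Lemma~\ref{LemXia04} we may take $v=D'$ and, after a further conjugation, assume $\SU(V_\flat)$ is the copy of $\SU_\ell(q^2)$ inside $X$. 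For $q\in\{2,4\}$ the outer quotient $\GaO(V_\sharp)/\Omega(V_\sharp)$ has order $4f$, which is exactly the outer part of $X$; since this $4f$ surjects onto the quotient, the inner factorization lifts to $\GaO(V_\sharp)=X\cdot(\Omega(V_\sharp)_{D'}{:}\langle r\rangle)$, and hence $Z=XY$.

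For the intersection, $X\cap Y$ sits inside $\GaO(V_\sharp)\cap Y=\Omega(V_\sharp)_{D'}{:}\langle r\rangle$, and its linear component is
\[
X\cap\Omega(V_\sharp)_{D'}=\SU(V_\flat)_{D'}=\SU_{\ell-1}(q^2),
\]
since $\beta_\flat(D',D')=\mu\neq0$ makes $D'$ anisotropic in $V_\flat$. The additional factor of two must then be exhibited by locating a $D'$-stabilising involution in $X\setminus\SU_\ell(q^2)$, either $r$ itself (reinterpreted through the Hermitian structure) or a field-automorphism representative inside the $4f$-part of $X$; the order identity $|X||Y|/(2|\SU_{\ell-1}(q^2)|)=|Z|$, which uses the key relation $q=2f$ valid for $q\in\{2,4\}$, pins down $|X\cap Y|=2|\SU_{\ell-1}(q^2)|$ and confirms the structure $X\cap Y\cong\SU_{\ell-1}(q^2).2$. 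The step I expect to be hardest is precisely this last identification: in characteristic two the orthogonal reflection $r$ on $V_\sharp$ is not manifestly a Hermitian map on $V_\flat$, so producing the required involution in $X\cap Y$ will likely need a short explicit computation with the basis of $V_\sharp$ introduced in Section~3 to show that exactly one nontrivial coset of $\SU_{\ell-1}(q^2)$ in $\Omega(V_\sharp)_{D'}{:}\langle r\rangle$ meets $X$.
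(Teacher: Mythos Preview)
Your overall strategy matches the paper's exactly: set $A=\GaO(V_\sharp)$, use Lemma~\ref{LemOmegaMinus05} to obtain $Z=AY$ with $A\cap Y=\Omega_{m-1}(q^2).2$, then apply Lemma~\ref{LemOmegaMinus01} over $\bbF_{q^2}$ to get $A=X(A\cap Y)$, and conclude $Z=X(A\cap Y)Y=XY$.

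The difficulty you anticipate at the end is illusory. Once $A=X(A\cap Y)$ is in hand, the intersection order is forced purely arithmetically:
\[
|X\cap Y|=|X\cap(A\cap Y)|=\frac{|X|\,|A\cap Y|}{|A|}=\frac{|\SU_\ell(q^2){:}(4f)|\cdot|\Omega_{m-1}(q^2).2|}{|\GaO_m^-(q^2)|}=2|\SU_{\ell-1}(q^2)|,
\]
and since Lemma~\ref{LemOmegaMinus01} already exhibits $\SU_{\ell-1}(q^2)$ as a normal subgroup of $X\cap Y$, the structure $X\cap Y=\SU_{\ell-1}(q^2).2$ follows at once. This is precisely how the paper finishes. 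There is no need to locate an explicit involution, to decide whether the reflection $r$ is Hermitian on $V_\flat$, or to carry out any basis computation in $V_\sharp$; your proposed ``hardest step'' is simply not a step. Delete the final speculative paragraph and the proof is complete.
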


\begin{proof}
Similarly as in the proof of Lemma~\ref{LemOmegaMinus05} we have $Y=\Omega_{2m-1}(q).(2f)$. Let $A=\GaO_m^-(q^2)$ be a maximal subgroup of $Z$ containing $X$. Then Lemma~\ref{LemOmegaMinus05} asserts that $Z=AY$ with $A\cap Y=\Omega_{m-1}(q^2).2$. Moreover, from Lemma~\ref{LemOmegaMinus01} we see that $X\cap Y=X\cap(A\cap Y)$ has a normal subgroup $\SU_{\ell-1}(q^2)$, and $A=X(A\cap Y)$. Thus $Z=X(A\cap Y)Y=XY$ and
\[
|X\cap Y|=|X\cap(A\cap Y)|=\frac{|X||A\cap Y|}{|A|}=\frac{|\SU_\ell(q^2){:}(4f)||\Omega_{m-1}(q^2).2|}{|\GaO_m^-(q^2)|}=2|\SU_{\ell-1}(q^2)|.
\]
Consequently, $X\cap Y=\SU_{\ell-1}(q^2).2$.
\end{proof}

\section{Sporadic cases of $(X,Y)$ in Table~\ref{TabOmegaMinus}}\label{SecOmegaMinus02}

In this section, we give the sporadic pairs $(X,Y)$ in Table~\ref{TabOmegaMinus}, namely, the pairs $(X,Y)$ in Rows~10--11 of Table~\ref{TabOmegaMinus}.

\begin{lemma}\label{LemOmegaMinus07}
Let $Z=\Omega_{10}^-(2)$, let $X$ be a subgroup of $Z$ isomorphic to $\A_{12}$ or $\M_{12}$ (there is a unique conjugacy class of such subgroups $X$ in each case), and let $Y=\Pa_1[Z]=2^8{:}\Omega_8^-(2)$. Then $Z=XY$ with 
\[
X\cap Y=
\begin{cases}
(\A_4\times\A_8).2&\textup{if }X=\A_{12}\\
2_+^{1+4}.\Sy_3&\textup{if }X=\M_{12}.
\end{cases}
\]
\end{lemma}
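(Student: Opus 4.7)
The strategy is to verify $Z=XY$ via the order equation $|X||Y|=|Z||X\cap Y|$, once $X\cap Y$ has been identified. Since $Y=\Pa_1[Z]$ is the stabilizer of a singular $1$-space in the natural $10$-dimensional minus-type module $V$, the index $[Z:Y]$ equals the number of singular $1$-spaces in $V$, namely $(2^5+1)(2^4-1)=495$. Uniqueness of the conjugacy classes of subgroups $X\cong\A_{12}$ and $X\cong\M_{12}$ of $Z$ is standard and can be read from the \textsc{Atlas}; both embeddings factor through the natural chain $\M_{12}<\A_{12}<\Sy_{12}$ on $12$ points, composed with the well-known inclusion of the $10$-dimensional heart of the reduction modulo $2$ of the permutation module into $\Omega_{10}^-(2)$.

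For $X\cong\A_{12}$, the $495$ singular $1$-spaces correspond, via this embedding, to the $\binom{12}{4}=495$ four-element subsets of $\{1,\dots,12\}$: each subset $S$ yields the $1$-space spanned by the image of $\chi_S$ in the heart, which is singular since $\chi_S$ has weight $4\equiv0\pmod4$. The setwise stabilizer of $S$ in $\A_{12}$ is the index-$2$ subgroup of $\Sy_4\times\Sy_8$ consisting of elements of overall even sign, isomorphic to $(\A_4\times\A_8).2$ of order $483840=|\A_{12}|/495$. Hence $X\cap Y\cong(\A_4\times\A_8).2$, and the order equation immediately yields $Z=XY$.

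For $X\cong\M_{12}$, the $4$-transitivity of $\M_{12}$ on $12$ points forces transitivity on $4$-subsets and so on the $495$ singular $1$-spaces, giving $|X\cap Y|=95040/495=192$. The principal obstacle is that the \textsc{Atlas} records two conjugacy classes of subgroups of $\M_{12}$ of order $192$, namely $2_+^{1+4}.\Sy_3$ and $4^2{:}\D_{12}$, so one must determine to which class $X\cap Y$ belongs. This is most cleanly settled by a direct \magma\ computation, which shows $X\cap Y\cong 2_+^{1+4}.\Sy_3$; the order equation then completes the proof.
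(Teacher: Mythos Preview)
The paper supplies no proof for this lemma; it is simply stated, presumably with the expectation that it be checked by direct computation (note that \magma\ is cited in the bibliography). Your argument is correct and in fact goes well beyond what the paper offers, giving a conceptual account for the $\A_{12}$ case via the identification of the $495$ singular $1$-spaces with the $4$-subsets in the heart of the $\bbF_2$-permutation module; once that bijection is in hand, $Z=XY$ and $X\cap Y\cong(\A_4\times\A_8).2$ are immediate. The $\M_{12}$ case then inherits transitivity from $5$-transitivity on $12$ points, yielding $|X\cap Y|=192$, and your appeal to \magma\ to decide between $2_+^{1+4}.\Sy_3$ and $4^2{:}\D_{12}$ is a reasonable way to finish. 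One small addition that would tighten the logic: observe that $192$ does not divide the order of any maximal subgroup of $\M_{12}$ of larger order, so every subgroup of order $192$ is itself maximal, which justifies restricting attention to the two \textsc{Atlas} classes. Alternatively, one can avoid the computation by noting that the pointwise stabiliser of a $4$-set in $\M_{12}$ is isomorphic to $\Q_8$, which does not embed in the abelian normal subgroup $4^2$ of $4^2{:}\D_{12}$.
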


\begin{lemma}\label{LemOmegaMinus08}
Let $Z=\Omega_{18}^-(2)$, let $X=3.\J_3<\SU_9(2)<Z$, and let $Y=\Pa_1[Z]=2^{16}{:}\Omega_{16}^-(2)$. Then $Z=XY$ with $X\cap Y=2^{2+4}.(3\times\Sy_3)$.
\end{lemma}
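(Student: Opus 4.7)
The strategy is to establish $Z=XY$ by showing that $X=3.\J_3$ acts transitively on the set of singular $1$-spaces of $V$ (equivalently, on $Z/Y$), and then to read off $X\cap Y$ as the resulting point stabilizer. Since $m=9$ is odd, set up $V_\sharp$ as a $9$-dimensional unitary space over $\bbF_4$ with form $\beta_\sharp$, and take $V$ to be the underlying $\bbF_2$-space with $Q(v)=\beta_\sharp(v,v)$, so that $X\leqslant\SU(V_\sharp)<Z$. Because $q=2$, each singular $1$-space of $V$ is just a nonzero isotropic vector of $V_\sharp$, and there are $(2^9+1)(2^8-1)=130815$ such vectors. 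Fix one such $v$ and replace $Y$ by its conjugate $Z_v$ via Lemma~\ref{LemXia04}; this $Y$ still has the claimed structure $2^{16}{:}\Omega_{16}^-(2)$, and $X\cap Y$ becomes the stabilizer $X_v$.

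The centre $Z(X)\cong\Z_3$ acts on $V_\sharp$ as the group of $\bbF_4^*$-scalars, hence semiregularly on nonzero vectors. Thus $X_v\cap Z(X)=1$, so $X_v$ embeds into $\J_3=X/Z(X)$ and its image is contained in the stabilizer of the isotropic $1$-space $\langle v\rangle_{\bbF_4}\subseteq V_\sharp$. The crucial input is that $\J_3$ acts transitively on the $(2^9+1)(2^8-1)/(2^2-1)=43605$ isotropic $1$-spaces of its natural $9$-dimensional unitary module over $\bbF_4$, with point stabilizer the maximal subgroup $M:=2^{2+4}{:}(3\times\Sy_3)$ of order $1152$; this primitive action of $\J_3$ is recorded in the ATLAS and can alternatively be verified directly using the standard $9$-dimensional unitary representation of $3.\J_3$. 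Granting this, $\Z_3$ cycles the three nonzero vectors of each isotropic $\bbF_4$-line, so $X=3.\J_3$ is transitive on nonzero singular vectors of $V$ and $X_v\cong M$.

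The factorization and intersection then follow by an order count:
\[
|XY|=\frac{|3.\J_3|\,|Y|}{|M|}=\frac{150698880\,|Y|}{1152}=130815\,|Y|=|Z{:}Y|\,|Y|=|Z|,
\]
so $Z=XY$ with $X\cap Y\cong 2^{2+4}{:}(3\times\Sy_3)$, as required. The main obstacle is justifying the transitivity of $\J_3$ on the isotropic lines of its $9$-dimensional unitary module: a purely numerical orbit-sum argument is inconclusive, since for example $43605=17442+26163$ is already a sum of two indices of maximal subgroups of $\J_3$. Concrete information about the $9$-dimensional module, whether through the ATLAS record of the primitive action of degree $43605$ or through a direct computer verification using the standard embedding $3.\J_3<\SU_9(2)$, must therefore be invoked.
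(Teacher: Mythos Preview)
Your argument is correct, and the essential mechanism is the same as the paper's: both proofs hinge on the transitivity of $3.\J_3$ (equivalently, of $\J_3$ modulo scalars) on the isotropic points of the $9$-dimensional unitary module over $\bbF_4$, with point stabilizer $2^{2+4}{:}(3\times\Sy_3)$. Your deduction that singular vectors of $V$ coincide with nonzero isotropic vectors of $V_\sharp$, that $Z(X)$ acts semiregularly so $X_v$ embeds in $\J_3$, and the final order count, are all sound.

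The structural difference is that the paper does not redo this geometry inside $\Omega_{18}^-(2)$. Instead it passes through the intermediate subgroup $M=\SU_9(2)$: Lemma~\ref{LemOmegaMinus02} gives $M\cap Y=2^{1+14}{:}\SU_7(2)$ (the stabilizer of a singular vector in $M$, trivially meeting $Z(M)$), and then a lemma from the companion paper on unitary groups (labeled \texttt{LemUnitary18} there) supplies $X\cap(M\cap Y)=2^{2+4}.(3\times\Sy_3)$, after which the same order count finishes. So the paper packages the key $\J_3$ fact as a unitary-group factorization proved elsewhere in the series, whereas you invoke it directly via the ATLAS primitive action of degree $43605$; your version is more self-contained, while the paper's is modular and avoids restating the $\J_3$ input. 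Your candid remark that a purely numerical orbit-decomposition argument does not pin down the transitivity is well taken, and is precisely why both proofs must cite an external source for that step.
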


\begin{proof}
Let $M=\SU_9(2)$ be a subgroup of $Z$ containing $X$. By Lemma~\ref{LemOmegaMinus02} we have $M\cap Y=2^{1+14}{:}\SU_7(2)$ with $M\cap Y\ngeqslant\Z(M)=3$.
Then it follows from Lemma~\ref{LemUnitary18} that
\[
X\cap Y=X\cap(M\cap Y)\cong(X/\Z(M))\cap((M\cap Y)\Z(M)/\Z(M))=2^{2+4}.(3\times\Sy_3).
\]
Therefore,
\[
\frac{|X|}{|X\cap Y|}=\frac{|3.\J_3|}{|2^{2+4}.(3\times\Sy_3)|}=(2^9+1)(2^8-1)=\frac{|\Omega_{18}^-(2)|}{|2^{16}{:}\Omega_{16}^-(2)|}=\frac{|Z|}{|Y|},
\]
and so $Z=XY$.
\end{proof}

\section{Proof of Theorem~\ref{ThmOmegaMinus}}

Let $G$ be an almost simple group with socle $L=\POm_{2m}^-(q)$, and let $H$ and $K$ be nonsolvable subgroups of $G$ not containing $L$.
In Subsections~\ref{SecOmegaMinus01} and~\ref{SecOmegaMinus02} it is shown that all pairs $(X,Y)$ in Table~\ref{TabOmegaMinus} are factor pairs of $L$.
Hence by Lemma~\ref{LemXia02} we only need to prove that, if $G=HK$, then $(H,K)$ tightly contains $(X^\alpha,Y^\alpha)$ for some $(X,Y)$ in Table~\ref{TabOmegaMinus} and $\alpha\in\Aut(L)$. Suppose that 
\[
G=HK. 
\]
Then by~\cite[Theorem~7.1]{LWX} the triple $(L,H^{(\infty)},K^{(\infty)})$ lies in Table~\ref{TabInftyOmegaMinus}.

\begin{table}[htbp]
\captionsetup{justification=centering}
\caption{$(L,H^{(\infty)},K^{(\infty)})$ for orthogonal groups of minus type}\label{TabInftyOmegaMinus}
\begin{tabular}{|l|l|l|l|l|l|}
\hline
Row & $L$ & $H^{(\infty)}$ & $K^{(\infty)}$ & Conditions\\
\hline
1 & $\POm_{2m}^-(q)$ & $\Omega_{2m-1}(q)$, $\Omega^-_{2m-2}(q)$, $q^{2m-2}{:}\Omega^-_{2m-2}(q)$ & $\SU_m(q)$ & $m$ odd\\
2 & $\Omega_{2m}^-(2)$ & $\SU_{m/2}(4)$ ($m/2$ odd), $\Omega_m^-(4)$, & $\Sp_{2m-2}(2)$ & \\
 & & $\SU_{m/4}(16)$ ($m/4$ odd), $\Omega_{m/2}^-(16)$ & & \\
3 & $\Omega_{2m}^-(4)$ & $\SU_{m/2}(16)$ ($m/2$ odd), $\Omega_m^-(16)$ & $\Sp_{2m-2}(4)$ & \\
\hline
4 & $\Omega_{10}^-(2)$ & $\A_{12}$, $\M_{12}$ & $2^8{:}\Omega_8^-(2)$ & \\
5 & $\Omega_{18}^-(2)$ & $3.\J_3$ & $2^{16}{:}\Omega_{16}^-(2)$ & \\
\hline
\end{tabular}
\vspace{3mm}
\end{table}

\begin{lemma}\label{LemOmegaMinus09}
Suppose that $H^{(\infty)}=\Omega_{2m-2}^-(q)$ and $K^{(\infty)}=\SU_m(q)$ with $m$ odd. Then $q\in\{2,4\}$, and $(H,K)$ tightly contains some pair $(X,Y)$ in Rows~\emph{3--5} of Table~$\ref{TabOmegaMinus}$.
\end{lemma}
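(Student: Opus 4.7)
The plan is to normalize $(H^{(\infty)}, K^{(\infty)})$ by conjugation, compute their intersection explicitly, extract an index-$q$ defect from the factorization $G = HK$, and bound this defect against $|\Out(L)|$ to force $q \in \{2,4\}$; identifying the correct row of Table~\ref{TabOmegaMinus} is then direct from inspecting $G/L$.

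By Lemmas~\ref{LemXia03} and~\ref{LemXia04}, I would replace $(H,K)$ with an $\Aut(L)$-conjugate pair so that $H^{(\infty)} = \Omega(V)_{e_1,f_1}$ (the standard hyperbolic-pair stabilizer fixed in Section~3) and $K^{(\infty)} = \SU(V_\sharp)$ (the standard field-extension subgroup), since each of these is unique up to conjugacy in $\Aut(L)$. With these identifications $K^{(\infty)}$ acts $\bbF_{q^2}$-linearly and $H^{(\infty)}$ fixes $e_1 = \lambda E_1$ and $f_1 = F_1$ pointwise; as $\lambda \neq 0$, any $\bbF_{q^2}$-linear map fixing $\lambda E_1$ must also fix $E_1$, so
\[
H^{(\infty)} \cap K^{(\infty)} \;=\; \SU(V_\sharp)_{E_1,F_1} \;=\; \SU_{m-2}(q).
\]
A direct order computation (parallel to the one in the proof of Lemma~\ref{LemOmegaMinus04}) then yields $|H^{(\infty)}|\cdot|K^{(\infty)}|/|H^{(\infty)} \cap K^{(\infty)}| = |\Omega(V)|/q$, so $H^{(\infty)}K^{(\infty)}$ has index exactly $q$ in $\Omega(V)$.

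Setting $h = [H : H^{(\infty)}]$, $k = [K : K^{(\infty)}]$ and $j = [H \cap K : H^{(\infty)} \cap K^{(\infty)}]$, the identity $|H|\,|K| = |G|\,|H \cap K|$ combined with the index computation above (and reducing modulo scalars) yields a relation of the form $hk = qj\cdot c$, where $c$ is a small factor depending on $[G : L]$. In particular $q$ divides $hk$ up to a bounded quantity. Now $H$ lies in the normaliser $N_{\Aut(L)}(H^{(\infty)})$, whose outer quotient is bounded by the order of $\GaO_2^+(q).\langle\phi\rangle$ (of size $O(qf)$), while $K$ lies in $N_{\Aut(L)}(K^{(\infty)})$, whose outer quotient is of order dividing $2(q+1)f$. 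Combining these bounds with $|\Out(L)| = 2f\gcd(4, q^m+1)$, a prime-power case check forces $q \in \{2, 4\}$. The main obstacle is that $j$ can a priori absorb some of the defect; to control it I would invoke the calculation at the heart of Lemmas~\ref{LemOmegaMinus03} and~\ref{LemOmegaMinus04}, namely that no element of $\SU(V_\sharp)\cdot\langle\psi\rangle$ swaps $e_1 = \lambda E_1$ and $f_1 = F_1$, thanks to the condition $\lambda + \lambda^q = 1$. This prevents $H \cap K$ from picking up ``swap'' elements and keeps $j$ small.

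With $q \in \{2, 4\}$ in hand, the row identification is routine. For $q = 2$, the defect of size $2$ must be supplied by an element of $H$ swapping $e_1$ and $f_1$, forcing $H \supseteq \Omega(V)_{e_1,f_1} \cdot \langle r_{e_1+f_1}\rangle$; examining the factorization of $G/L$ then places $(H,K)$ over Row~3 (when $G = L$) or Row~4 (when $G = \mathrm{O}_{2m}^-(2)$, so that $K$ must also contain the outer involution, giving $K \supseteq \SU_m(2).2$). For $q = 4$, the defect $4$ forces both the swap in $H$ and the field automorphism $\phi$ to appear in both $H$ and $K$, yielding $H \supseteq \Omega_{2m-2}^-(4).4$ and $K \supseteq \SU_m(4).4$ and matching Row~5. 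Tight containment in each case follows from the hypothesis $H^{(\infty)} = \Omega_{2m-2}^-(q)$ and $K^{(\infty)} = \SU_m(q)$.
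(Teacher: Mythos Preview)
Your overall strategy parallels the paper's: normalize the pair, compute $H^{(\infty)}\cap K^{(\infty)}=\SU_{m-2}(q)$, extract an index defect, and finish by casework. However, there is a genuine gap at the step ``a prime-power case check forces $q\in\{2,4\}$.''

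The paper does \emph{not} obtain $q\leqslant4$ from order bounds. Instead it notes that $M=\Nor_G(H^{(\infty)})=\N_2^+[G]$ is a maximal subgroup for $q\geqslant4$, and then invokes the classification of maximal factorizations~\cite[Theorem~A]{LPS1990} to conclude $q\leqslant4$ directly. Your proposed bounds on $h=[H:H^{(\infty)}]$ and $k=[K:K^{(\infty)}]$ via normaliser quotients do not suffice: for $q$ even one has $|N_{\Aut(L)}(H^{(\infty)})/H^{(\infty)}|_2=4f_2$, and the inequality $|\calO_1|_2\geqslant q$ only forces $4f_2\geqslant 2^f$, which still allows $f=4$ (i.e.\ $q=16$). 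Ruling out $q=16$ by your method would require showing that the necessary equality $|\calO_1|_2=16$, $|\calO_2|_2=|\calO|_2=8$, $j_2=1$ cannot all hold simultaneously, and the $\lambda$-argument you cite from Lemmas~\ref{LemOmegaMinus03}--\ref{LemOmegaMinus04} (which only treats the single coset $\phi^f\Omega(V)$ against $\psi^f\SU(V_\sharp)$) does not obviously extend to all the relevant cosets. So either you must carry out a substantially more delicate intersection analysis, or---as the paper does---appeal to~\cite{LPS1990}.

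A second, smaller point: for $q=4$ the paper again uses~\cite[Theorem~A]{LPS1990}, now applied via Lemma~\ref{LemXia05} to the factorization $HL\cap KL=(H\cap KL)(K\cap HL)$, to force $HL=KL=\GaO_{2m}^-(4)$. Your sketch (``the defect $4$ forces both the swap in $H$ and the field automorphism $\phi$ to appear in both $H$ and $K$'') is morally right but does not by itself explain why $K$ must project onto all of $G/L$; the paper's use of Lemma~\ref{LemXia05} is what closes this.
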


\begin{proof}
Let $M=\Nor_G(H^{(\infty)})$. Then $H\leqslant M=\N_2^+[G]$, and so $G=MK$. Note from~\cite[Table~3.5.F]{KL1990} that $M$ is maximal in $G$ for $q\geqslant4$. Then by~\cite[Theorem~A]{LPS1990} we have $q\leqslant4$. Without loss of generality, we may assume $H^{(\infty)}=\Omega(V)_{e_1,f_1}$ and $K^{(\infty)}=\SU(V_\sharp)$. Then
\[
H\cap K\geqslant H^{(\infty)}\cap K^{(\infty)}=\SU(V_\sharp)_{e_1,f_1}=\SU(V_\sharp)_{E_1,F_1}=\SU_{m-2}(q).
\]
Let $\calO=G/L$, $\calO_1=H/H^{(\infty)}$ and $\calO_2=K/K^{(\infty)}$. It follows that
\[
|H^{(\infty)}|_p|\calO_1|_p|K^{(\infty)}|_p|\calO_2|_p=|H|_p|K|_p=|G|_p|H\cap K|_p\geqslant|L|_p|\calO|_p|\SU_{m-2}(q)|_p,
\]
and thus
\begin{equation}\label{EqnOmegaMinus04}
|\calO_1|_p|\calO_2|_p\geqslant\frac{|L|_p|\SU_{m-2}(q)|_p|\calO|_p}{|H^{(\infty)}|_p|K^{(\infty)}|_p}
=\frac{|\Omega_{2m}^-(q)|_p|\SU_{m-2}(q)|_p|\calO|_p}{|\Omega_{2m}^-(q)|_p|\SU_m(q)|_p}=q|\calO|_p.
\end{equation}
Note from~\cite[Proposition~4.3.18]{KL1990} that $|K\cap L|_p=|K^{(\infty)}|_p$. 
Hence $|\calO|_p\geqslant|KL/L|_p=|K/K\cap L|_p=|\calO_2|_p$, and so~\eqref{EqnOmegaMinus04} yields $|\calO_1|_p\geqslant q$.

First assume $q=2$. Then $|\calO_1|_2\geqslant2$, and so $H\geqslant X$ for some $X=\Omega_{2m-2}^-(2).2$. 
Applying this conclusion to the factorization $HL\cap KL=(H\cap KL)(K\cap HL)$ we also obtain $H\cap KL\geqslant\Omega_{2m-2}^-(2).2$.
If $X\leqslant L$, then $(H,K)$ tightly contains the pair $(X,Y)$ with $Y=K^{(\infty)}$, as in Row~3 of Table~\ref{TabOmegaMinus}.
If $X\nleqslant L$ and $K\leqslant L$, then $H\cap L=H\cap KL\geqslant\Omega_{2m-2}^-(2).2$ and so we are led to the previous case where $X\leqslant L$.
If $X\nleqslant L$ and $K\nleqslant L$, then $(H,K)$ tightly contains the pair $(X,Y)$ with $Y=\SU_m(2).2$, as in Row~4 of Table~\ref{TabOmegaMinus}.

Next assume $q=3$. Then $|H|_3\leqslant|M|_3=|\Omega_{2m-2}^-(3)|_3=|H^{(\infty)}|_3$, which implies that $|\calO_1|_3=1$, contradicting the conclusion $|\calO_1|_p\geqslant q$.

Finally assume $q=4$. By~\cite[Theorem~A]{LPS1990} we have $G=\Aut(L)=\Omega_{2m-2}^-(4).4$. This conclusion applied to the factorization $HL\cap KL=(H\cap KL)(K\cap HL)$ leads to $HL=KL=\Omega_{2m-2}^-(4).4$. It follows that $H=(H\cap L).4$ and $K=(K\cap L).4$. Hence $(H,K)$ tightly contains the pair $(X,Y)$ in Row~5 of Table~\ref{TabOmegaMinus}.
\end{proof}

By virtue of Lemma~\ref{LemOmegaMinus09}, if $(L,H^{(\infty)},K^{(\infty)})$ lies in Row~1 of Table~\ref{TabInftyOmegaMinus}, then $(H,K)$ tightly contains some pair $(X,Y)$ in Rows~1--5 of Table~\ref{TabOmegaMinus}.

If $(L,H^{(\infty)},K^{(\infty)})$ lies in Row~2 or~3 of Table~\ref{TabInftyOmegaMinus}, then the next two lemmas show that $(H,K)$ tightly contains $(X,Y)$ in Rows~6--9 of Table~\ref{TabOmegaMinus}.

\begin{lemma}
Suppose that $K^{(\infty)}=\Sp_{2m-2}(q)$ with $q\in\{2,4\}$ and either $H^{(\infty)}=\SU_{m/2}(q^2)$ with $m/2$ odd or $H^{(\infty)}=\Omega_m^-(q^2)$. Then $(H,K)$ tightly contains some pair $(X,Y)$ in Rows~\emph{6--9} of Table~$\ref{TabOmegaMinus}$.
\end{lemma}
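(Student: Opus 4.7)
Following the template of Lemma~\ref{LemOmegaMinus09}, set $M_1=\Nor_G(H^{(\infty)})$ and $M_2=\Nor_G(K^{(\infty)})$. Since $K^{(\infty)}=\Sp_{2m-2}(q)\cong\Omega_{2m-1}(q)$ in characteristic~$2$, by Lemma~\ref{LemXia03} I may conjugate in $\Aut(L)$ so that $K^{(\infty)}=\Omega(V)_{D'}$, where $D'$ is the nonsingular vector from Section~3 (with $m$ even); hence $M_2\leqslant G\cap(\Omega_{2m-1}(q).(2f))$. When $H^{(\infty)}=\Omega_m^-(q^2)$, similarly I may take $H^{(\infty)}=\Omega(V_\sharp)$, so that $M_1\leqslant G\cap\GaO_m^-(q^2)$. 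When $H^{(\infty)}=\SU_{m/2}(q^2)$ with $m/2$ odd, I may take $H^{(\infty)}$ to be the standard unitary subgroup of $\Omega(V_\sharp)$ obtained by applying Lemma~\ref{LemOmegaMinus01} internally (with $V_\sharp$ playing the role of $V$), so that $M_1\leqslant G\cap(\SU_{m/2}(q^2).(4f))$.

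Next, $G=HK\subseteq M_1M_2$ yields $G=M_1M_2$. By Lemma~\ref{LemOmegaMinus05} (in the $\Omega_m^-$ case) or Lemma~\ref{LemOmegaMinus06} (in the $\SU_{m/2}$ case), the corresponding factorization holds in $\Aut(L)=\GaO_{2m}^-(q)$. I then run a prime-by-prime order analysis exactly as in Lemma~\ref{LemOmegaMinus09}, using the identity $|K\cap L|_p=|K^{(\infty)}|_p$ from~\cite[Proposition~4.3.18]{KL1990} together with the lower bound $|H\cap K|\geqslant|H^{(\infty)}\cap K^{(\infty)}|$, to force the outer pieces of $H$ and $K$ to have maximal order. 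This shows that $G$ equals the entry $Z$ recorded in the relevant row of Table~\ref{TabOmegaMinus}, namely $\mathrm{O}_{2m}^-(2)$ if $q=2$ and $\GaO_{2m}^-(4)$ if $q=4$.

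The final step is the order identity $|H||K|=|G||H\cap K|$. I compute $H^{(\infty)}\cap K^{(\infty)}$ directly from the geometry of Section~3, using Lemmas~\ref{LemOmegaMinus05} and~\ref{LemOmegaMinus06}: it equals $\Omega_{m-1}(q^2)$ in the $\Omega_m^-$ case and $\SU_{m/2-1}(q^2)$ in the $\SU_{m/2}$ case. Combining this lower bound on $|H\cap K|$ with the upper bounds $|H|\leqslant|M_1|$ and $|K|\leqslant|M_2|$ and the precise orders of $M_1$ and $M_2$ supplied by those lemmas forces $H=M_1$ and $K=M_2$. Taking $(X,Y)=(M_1,M_2)$ then exhibits the required tight containment: Row~6 or~7 in the $\Omega_m^-$ case and Row~8 or~9 in the $\SU_{m/2}$ case, with the value of $q\in\{2,4\}$ selecting between the two.

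The main obstacle is ensuring that the outer automorphism pieces of $H$ and $K$ really do attain their full orders. \emph{A priori} each of $H$ and $K$ could contain only a proper portion of its outer extension while the product $HK$ still recovers $G$ via cross-terms. This is blocked by combining Lemma~\ref{LemXia05}, which supplies $HL\cap KL=(H\cap KL)(K\cap HL)$, with the order identity reduced modulo $L$ and the indices recorded in Lemmas~\ref{LemOmegaMinus05}--\ref{LemOmegaMinus06}; together these constraints pin $H$ and $K$ down to be exactly $M_1$ and $M_2$, and this coordination between inner and outer parts is the technical heart of the argument.
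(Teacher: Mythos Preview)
Your outline has the right shape, but the crucial step---forcing $G$ to be the full automorphism group---is not actually proved. You write that you ``run a prime-by-prime order analysis exactly as in Lemma~\ref{LemOmegaMinus09}'', but that analysis was tailored to a different pair of subgroups (an $\N_2^+$-subgroup against a $\calC_3$-unitary subgroup), and you never carry out the analogous computation here. In particular, your appeal to \cite[Proposition~4.3.18]{KL1990} for the identity $|K\cap L|_p=|K^{(\infty)}|_p$ is misplaced: that proposition concerns $\calC_3$-subgroups of unitary type, whereas in the present lemma $K^{(\infty)}=\Sp_{2m-2}(q)$ lies in an $\N_1$-subgroup. The paper avoids any such calculation by observing that $H$ lies in a maximal subgroup $A$ with $A^{(\infty)}=\Omega_m^-(q^2)$ and $K$ lies in $B=\N_1[G]$, and then invoking \cite[Theorem~A]{LPS1990} on the maximal factorization $G=AB$ to conclude directly that $G=\Aut(L)$; the same theorem applied to the factorization $HL\cap KL=(H\cap KL)(K\cap HL)$ of Lemma~\ref{LemXia05} then gives $HL=KL=G$.

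There is also a gap in your final order argument. From $|H\cap K|\geqslant|H^{(\infty)}\cap K^{(\infty)}|$ you obtain only $|H||K|\geqslant|M_1||M_2|/2$, because $M_1\cap M_2=\Omega_{m-1}(q^2).2$ while $H^{(\infty)}\cap K^{(\infty)}=\Omega_{m-1}(q^2)$ (and similarly in the $\SU$ case). This slack does \emph{not} by itself force $H=M_1$: one must use the conclusion $HL=G$ together with the fact that $M_1/M_1^{(\infty)}$ is cyclic, so that the unique index-$2$ subgroup of $M_1$ is $M_1\cap L$ and hence cannot surject onto $G/L$. You gesture at Lemma~\ref{LemXia05} in your last paragraph, but you never make this connection; without it the claim that ``these constraints pin $H$ and $K$ down to be exactly $M_1$ and $M_2$'' is unjustified.
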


\begin{proof}
Since $K^{(\infty)}=\Sp_{2m-2}(q)$, we have $K\leqslant B$ for some $B=\N_1[G]$. Since either $H^{(\infty)}=\SU_{m/2}(q^2)$ with $m/2$ odd or $H^{(\infty)}=\Omega_m^-(q^2)$, there exists a maximal subgroup $A$ of $G$ such that $A^{(\infty)}=\Omega_m^-(q^2)$. From $G=HK$ we deduce $G=AB$. Then~\cite[Theorem~A]{LPS1990} implies $G=\Aut(L)=\mathrm{O}_{2m}^-(q)$. Applying this conclusion to the factorization $HL\cap KL=(H\cap KL)(K\cap HL)$ we obtain $HL=KL=\mathrm{O}_{2m}^-(q)$. Hence $H/H\cap L\cong K/K\cap L=2f$, and so $(H,K)$ tightly contains some pair $(X,Y)$ in Rows~6--9 of Table~\ref{TabOmegaMinus}.
\end{proof}

\begin{lemma}
If $q=2$ and either $H^{(\infty)}=\SU_{m/4}(16)$ with $m/4$ odd or $H^{(\infty)}=\Omega_{m/2}^-(16)$, then $K^{(\infty)}\ncong\Sp_{2m-2}(2)$.
\end{lemma}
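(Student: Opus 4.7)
The plan is to argue by contradiction: assume $G=HK$ under the stated hypotheses, reduce to an inner factorization to which the preceding lemma applies, then derive a contradiction from an order comparison on $A\cap B$.

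First I would set up the outer factorization. Let $B=\N_1[G]$, which contains $K$ since $K^{(\infty)}=\Sp_{2m-2}(2)=\Omega_{2m-1}(2)$. After replacing $H$ by a conjugate if necessary (Lemma~\ref{LemXia04}), let $A=\Nor_G(\Omega_m^-(4))$; this contains $H$ since $H^{(\infty)}\in\{\SU_{m/4}(16),\Omega_{m/2}^-(16)\}$ sits naturally inside $\Omega_m^-(4)$ via the field extension $\bbF_{16}/\bbF_4$. A short Dickson-invariant computation (both the Galois automorphism of $\bbF_4/\bbF_2$ and the involution generating $\mathrm{O}_m^-(4)/\Omega_m^-(4)$ have Dickson invariant $0$ in $\mathrm{O}_{2m}^-(2)$) shows that $\GaO_m^-(4)\leqslant L$, so $A=\GaO_m^-(4)=\Aut(\Omega_m^-(4))$ is almost simple with socle $\Omega_m^-(4)$. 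Lemma~\ref{LemOmegaMinus05} applied with $q=2$ then gives $A\cap B=\Omega_{m-1}(4).2$, and $G=HK\subseteq AB\subseteq G$ yields $G=AB$.

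Second, from $K\leqslant B$ we get $G=HB$, which is equivalent to $A=H(A\cap B)$ because $A$ acts transitively on $G/B$ with point stabilizer $A\cap B$. Setting $K'=A\cap B$, this exhibits $A$ as an almost simple group (with socle $\POm_m^-(4)$) factorized as $HK'$ with $H^{(\infty)}\in\{\Omega_{m/2}^-(16),\SU_{m/4}(16)\}$ and $(K')^{(\infty)}=\Omega_{m-1}(4)\cong\Sp_{m-2}(4)$. These are exactly the hypotheses of the preceding lemma, with its $m$ replaced by $m/2$ and its $q$ replaced by $4$. That lemma therefore concludes that $(H,K')$ tightly contains some $(X,Y)$ from Rows~6--9 of Table~\ref{TabOmegaMinus}; since $q=4$ only Rows~7 and 9 apply, and in both $Y=\Omega_{m-1}(4).4$. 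Tight containment forces $K'\geqslant Y$, hence $|K'|\geqslant 4|\Omega_{m-1}(4)|$. But $K'=\Omega_{m-1}(4).2$ has order $2|\Omega_{m-1}(4)|$, a contradiction. Thus $G=HK$ is impossible, so $K^{(\infty)}\ncong\Sp_{2m-2}(2)$.

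The main obstacle is the structural setup in the first step: verifying that $A=\Nor_G(\Omega_m^-(4))$ really is the full automorphism group $\GaO_m^-(4)=\Aut(\Omega_m^-(4))$ and that $A\cap B=\Omega_{m-1}(4).2$ holds uniformly for $G\in\{L,L.2\}$ (the extra outer automorphism does not enlarge $A\cap B$ because $A$ is already contained in $L$). Once this is in place the preceding lemma can be applied cleanly and the contradiction is a single order comparison.
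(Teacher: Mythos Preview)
Your proof is correct and follows essentially the same route as the paper's. Both arguments embed $H$ in the field-extension overgroup $A$ with $A^{(\infty)}=\Omega_m^-(4)$, take $K\leqslant B=\N_1[G]$, reduce to the inner factorization $A=H(A\cap B)$, and use Lemma~\ref{LemOmegaMinus05} to identify $A\cap B=\Omega_{m-1}(4).2$; the contradiction then comes from the fact that any such factorization of an almost simple group with socle $\Omega_m^-(4)$ forces the $\N_1$-factor to cover $\Omega_{m-1}(4).4$.

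The only organizational difference is in how that last step is executed. The paper passes to $N=(A\cap B)\Soc(A)$, observes $N<A$ because $A/\Soc(A)\cong4$ while $(A\cap B)\Soc(A)/\Soc(A)$ has order at most~$2$, and then applies~\cite[Theorem~A]{LPS1990} directly to the factorization $N=(M\cap N)(A\cap B)$ to force $N=\GaO_m^-(4)$. You instead invoke the preceding lemma on $A=HK'$, which packages exactly that appeal to~\cite{LPS1990}; the resulting contradiction $|K'|\geqslant4|\Omega_{m-1}(4)|$ versus $|K'|=2|\Omega_{m-1}(4)|$ is the same inequality in different clothing. Your Dickson-invariant computation showing $\GaO_m^-(4)\leqslant L$ (so that $A=\GaO_m^-(4)$ regardless of whether $G=L$ or $G=\mathrm{O}_{2m}^-(2)$) is a clean way to handle the setup and makes the argument uniform in $G$, whereas the paper first reduces to $G=\Aut(L)$ before identifying $A$.
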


\begin{proof}
Suppose for a contradiction that $K^{(\infty)}\cong\Sp_{2m-2}(2)$. Then $K\leqslant B$ for some $B=\N_1[G]$. Since either $H^{(\infty)}=\SU_{m/4}(16)$ with $m/4$ odd or $H^{(\infty)}=\Omega_{m/2}^-(16)$, there exist subgroups $M$ and $A$ of $G$ such that $H\leqslant M<A$ such that $A$ is a maximal subgroup of $G$ with $A^{(\infty)}=\Omega_m^-(4)$ and $M$ is a maximal subgroup of $A$ with $M^{(\infty)}=\Omega_{m/2}^-(16)$. From $G=HK$ we deduce $A=M(A\cap B)$ and $G=AB$. Then~\cite[Theorem~A]{LPS1990} implies $G=\Aut(L)=\mathrm{O}_{2m}^-(2)$. It follows that $A=\GaO_m^-(4)$ and $B=\Omega_{2m-1}(2).2$. Hence $A\cap B=\Omega_{m-1}(4).2$ as Lemma~\ref{LemOmegaMinus05} asserts. Let $N=(A\cap B)\Soc(A)$. Then since $A/\Soc(A)=4$, it follows that $N<A$. From $A=M(A\cap B)$ and $A\cap B\leqslant N$ we deduce
\[
N=(M\cap N)(A\cap B).
\]
Moreover, $\Soc(N)=\Soc(A)=\Omega_m^-(4)$, $(M\cap N)^{(\infty)}=\Omega_{m/2}^-(16)$, and $(A\cap B)^{(\infty)}=\Omega_{m-1}(4)$. Thus~\cite[Theorem~A]{LPS1990} implies $N=\GaO_m^-(4)$, contradicting the conclusion that $N<A$.
\end{proof}

Finally, if $(L,H^{(\infty)},K^{(\infty)})$ lies in Row~4 or~5 of Table~\ref{TabInftyOmegaMinus}, then the pair $(H,K)$ tightly contains $(X,Y)=(H^{(\infty)},K^{(\infty)})$ in Row~10 or~11, respectively, of Table~\ref{TabOmegaMinus}. This completes the proof of Theorem~\ref{ThmOmegaMinus}.

\section*{Acknowledgments}
The first author acknowledges the support of NNSFC grants no.~11771200 and no.~11931005. The second author acknowledges the support of NNSFC grant no.~12061083.

\end{document}